\headsep \linespread{1.2}
\newtheorem{thm}{Theorem}[section]
\newtheorem{lemma}[thm]{Lemma}
\theoremstyle{definition}
\newtheorem{definition}[thm]{Definition}
\newtheorem {remark}[thm]{Remark}
\numberwithin{equation}{section}
\newcommand{\R}{\mathbb{R}}
\newcommand{\biindice}[3]%
{

\begin{array}[t]{c}
#1\\
{\scriptstyle #2}\\
{\scriptstyle #3}\\
\end{array}

}
\def\biindice#1#2#3{#1_{\textstyle{#2\atop #3}}}
\theoremstyle{plain}
\newtheorem{proposition}[thm]{Proposition}
\numberwithin{equation}{section}
\begin{document}
\title{\bf  Existence and asymptotic behavior of the least energy solutions for fractional Choquard equations with  potential  well
  \thanks{The research was supported  by the National Natural Science Foundation of China  (11671162) and the excellent doctorial dissertation
cultivation grant (No.2015YBYB022,2016YBZZ080) from Central China Normal University.
}}
\author{Lun Guo
\thanks {School of Mathematics and Statistics,  Central China Normal University, Wuhan, 430079,  P. R. China (lguo@mails.ccnu.edu.cn).}
\ \ \ \ \ \ Tingxi Hu \thanks{School of Mathematics and Statistics,
Central China Normal University, Wuhan, 430079, P. R. China (tingxihu@mails.ccnu.edu.cn).}}

\date{}
\maketitle

\begin{abstract}
In this paper, we  consider the following  nonlinear Choquard equation driven by fractional Laplacian
\begin{equation*}
(-\Delta)^{s} u+\lambda V(x)u=\big(I_{\alpha}\ast F(u)\big)f(u) \ \ \text{in}\ \ \R^{N},
\end{equation*}
where $V(x)$ is a nonnegative continuous potential function, $0<s<1$, $N> 2s$, $(N-4s)^{+}<\alpha< N$ and $\lambda$ is a positive parameter.
By  variational methods, we prove the existence of least energy solution which localizes near the bottom of potential well $int \big(V^{-1}(0)\big)$ as $\lambda$ large enough.
\end{abstract}

{\bf Keywords:} {Fractional Lpalacian; Choquard equation;  Potential well; Least energy solution}

{\bf MSC:} {35J20, 35J65}

	\section{\label{Int}Introduction and main results}
	Given  $s\in (0,1)$, $N> 2s$ and $\alpha \in (0,N)$,  we study the following nonlinear Choquard equation driven by a fractional Laplacian operator
	\begin{equation}\label{eqs1.1}
	(-\Delta)^{s} u+V(x)u=\big(I_{\alpha}\ast F(u)\big)f(u) \ \ \text{in}\ \ \R^{N},
	\end{equation}
	where $V\in C(\R^N,\R)$ is a potential function, $F(u)=\int_{0}^{u}f(\tau)d\tau$ and $I_{\alpha}$ is the Riesz potential which is defined as
	$$
	I_{\alpha}(x):=\frac{A_{\alpha}}{|x|^{N-\alpha}}, \  \text{where} \  A_{\alpha}=\frac{\Gamma(\frac{N-\alpha}{2})}{\Gamma (\frac{\alpha}{2}\pi^{\frac{N}{2}}2^{\alpha})}\  \text{and } \Gamma ~\text{is the Gamma function}.
	$$	
The fractional Laplacian operator $(-\Delta)^s$  is defined by
%a function $\psi$ belonging to the Schwartz space $\mathcal{S}(\R^N)$ of rapidly decaying functions as
\begin{equation}\label{s3-3}
(-\Delta)^s \Psi(x)=C_{N,s}P.V.
\int_{\R^{N}}
\frac{\Psi(x)-\Psi(y)}{|x-y|^{N+2s}}dy, \ \ \Psi\in \mathcal{S}(\R^N),
\end{equation}
where $P.V.$ stands for the Cauchy principal value, $C_{N,s}$ is a normalized constant,
$\mathcal{S}(\R^N)$ is the Schwartz space of rapidly decaying functions. For much more details on fractional Laplacian operator we refer the readers to \cite{DPV} and the references therein.

The fractional power of Laplacian is the infinitesimal generator of  L\'{e}vy stable diffusion process and arise in anomalous diffusion in plasma, population dynamics, geophysical fluid dynamics, flames propagation, chemical reactions in liquids and American options in finance and so on.
For interested readers we refer to \cite{Ap, FL, La1} and references therein.

	In recent years, a great attention has been  focused on the study of nonlinear equations or systems involving fractional Laplacian operators and many papers  concerned with the  existence, multiplicity, uniqueness, regularity and asymptotic behavior of solutions to fractional Schr\"{o}dinger equations are published, see for example \cite{BCD, CS6, CT,CS,CW,FW,RS,T}. We must emphasize a  remarkable work of Caffarelli and Silvestre \cite{CS}, the authors express the nonlocal operator $(-\Delta)^s$ as a Dirichlet-Neumann map for a certain elliptic boundary value problem with local differential operators defined on the upper half space. The technique of Caffarelli and Silvestre is a valid tool to deal with the equations involving fractional operators.

When $s=1$, equation \eqref{eqs1.1} is the classical nonlinear Choquard equation
	\begin{equation}\label{eqs1.3}
	-\Delta u+V(x)u=\big(I_{\alpha}\ast F(u)\big)f(u) \ \ \text{in}\ \ \R^{N}.
	\end{equation}
	Equation  \eqref{eqs1.3}  can be seen in the context of
	various physical models, such as multiple particle systems \cite{G, LS}, quantum mechanics \cite{MPT,P1,P2} and  laser beams, etc.
	
	As a special case of problem \eqref{eqs1.3} with $F(s)=s^{p}/p$, the following Choquard type equation
\begin{equation}\label{eqs1-4}
  -\Delta u+V(x)u=1/p\big(I_{\alpha}\ast |u|^{p}\big)|u|^{p-2}u \ \ \text{in}\ \ \R^{N}
\end{equation}
is studied extensively.
When $N=3$, $\alpha=2$, $p=2$ and $V\equiv1$, equation \eqref{eqs1-4} is called Choquard-Pekar equation \cite{LS,P} and also known as the Schr\"odinger-Newton equation, which was introduced by Penrose in his discussion on the selfgravitational collapse, see \cite{MPT}. In that case, By
using symmetric decreasing rearrangement inequalities, Lieb \cite{EHL} obtained existence and uniqueness of the ground state solution to equation \eqref{eqs1-4}.

It is known that  problem  \eqref{eqs1-4}  has a solution if and only if $p \in\left[\frac{N+\alpha}{N}, \frac{N+\alpha}{N-2}\right]$. If  $V(x)$ is a constant, Ma and Zhao \cite{MZ} proved that each  positive solution to equation \eqref{eqs1-4} must be radially symmetric and monotone decreasing about some fixed point under the assumption  $p\in [2,\frac{N+\alpha}{N-2})$.
Subsequently, by  variational methods,  Moroz and Van Schaftingen \cite{MS1} obtained the existence of  least energy solutions and gave some properties about the   symmetry,  regularity,  decay asymptotic behavior at infinity of the least energy solutions. In \cite{MS2}, Moroz and Van Schaftingen also obtained  a similar conclusion under the assumption of  Berestycki-Lions type nonlinearity. Equation (\ref{eqs1-4}) with lower critical exponent $p=\frac{N+\alpha}{N} $ also had been studied by Moroz and Van Schaftingen in \cite{MS3}.
If $N=3$ and $\alpha=2$,  Xiang \cite{X} obtain the uniqueness and  nondegeneracy results for the least energy solution to equation \eqref{eqs1.3} as  $p>2$ or $p$ sufficiently close to 2. When $V(x)$ is not a constant, positive solutions, sign-changing solutions, multi-bump solutions, multi-peak solutions  and normalize solutions and so on are also studied for equation \eqref{eqs1-4}, we refer the readers to \cite{ANY,CCS,CS1,LY} and references therein.

	When $s\in (0,1)$, we call equation \eqref{eqs1.1}  the fractional Choquard equation, which has also attracted a lot of interest.
 In the case $s=1/2$, problem  has been used to model the dynamics of pseudo-relativistic boson stars. Indeed, in \cite{FL}, the following equation is studied:
 $$
 \sqrt{-\Delta}u+u=\left(\frac{1}{|x|}\ast |u|^2\right)u.
 $$
 In \cite{CHKL, HL}, the authors studied the initial value problem for the boson star equation.
 Recently, d'Avenia, Siciliano and Squassina \cite{ASS} obtained some results on  existence, nonexistence, regularity, symmetry and decay properties to solutions for  equation \eqref{eqs1.1}. Chen and Liu in \cite{CL} considered a kind of non-autonomous fractional Choquard equations and obtained the existence of least energy solutions to these equations. Not too long ago, Shen, Gao and Yang in \cite{SGY} proved the existence of least energy solutions to  equation \eqref{eqs1.1} with nonlinearity satisfies the general Berestycki-Lions type assumptions.

 As far as we know,  there is no result on the existence of the least energy solution  to equation \eqref{eqs1.1} with potential well. When $s=1$, Alves,  N\'{o}brega and Yang \cite{ANY} obtained the existence of multi-bump solutions to the following equation
 \begin{equation*}
-\Delta u+(\lambda a(x)+1)u=\Big(\frac{1}{|x|^{\mu}}\ast|u|^p\Big)|u|^{p-2}u \ \ \text{in}\ \ \R^{3},
\end{equation*}
where $\mu\in (0,3)$ and $p\in (2,6-\mu)$. If the potential well $int(a^{-1}(0))$  consists of $k$ disjoint components, then  they proved that there exist at least $2^k-1$ multi-bump solutions which are concentrated at any given disjoint bounded domains of $int(a^{-1}(0))$ as the depth $\lambda$ goes to infinity.
This interesting phenomenon was first considered by Bartsch and Wang \cite{BWBW}, Ding and Tanaka \cite{DT}
for semi-linear Schr\"{o}dinger equations.
However,  some essential differences between the fractional Laplacian $(-\Delta)^{s}$ and local operator $-\Delta$ have been pointed out by Niu and Tang  \cite{NT}recently, in which they proved that the nonnegative least energy solution to fractional Schr\"{o}dinger equation cannot be trapped around only one isolated component and become arbitrary small in other components of potential well.
Due to this fact, the corresponding nonnegative least energy solution to equation \eqref{eqs1.1} must be trapped around all the domain $int \big(V^{-1}(0)\big)$, which implies we cannot obtain a similar conclusion with \cite{ANY}. Here we also want to mention that there is not any  result on the existence of multi-bump sign-changing solutions.

	Motivated by the works above, In this paper, our goal  is to investigate the existence and asymptotic behavior of least energy solutions
to  equation \eqref{eqs1.1}. Moreover, in this article, we have considered a class of Choquard type equation more general than that considered in \cite{ANY}. Also the equation we considered is  more complicated than the factional Schr\"{o}dinger equation which is considered in \cite{NT}.  Because, in our case, the  nonlinearity is much more general and  the nonlinearity, fractional Laplacian operator are both nonlocal.   In order to state our main results, we require the following assumptions on $V(x)$
	\begin{enumerate}
		\item [($V_1$)]  \ $\displaystyle V \in \mathcal{C}(\R^N, \R)$ satisfies $ V(x)\geq 0$, $\Omega:=int\big( V^{-1}(0)\big)$ is non-empty with smooth boundary and $\bar{\Omega}=V^{-1}(0)$;
		\item [($V_2$)]  \  There exists $M>0$ such that $\mu\big(\{x\in \R^N \mid V(x)\leq M\}\big)< \infty$, where $\mu$ is the Lebesgue measure;
	\end{enumerate}
and  $f(u)\in \mathcal{C}^{1}(\R,\R)$  satisfies the following assumptions
\begin{enumerate}
\item [($f_1$)]  \   $f(t)=o(t)$ as $t\rightarrow 0$;

\item [($f_2$)]  \  $\displaystyle  f(t)=o(t^p)$ as $t\rightarrow \infty$, for some $p$ satisfies $1<p <\frac{\alpha+2s}{N-2s}$;

\item [($f_3$)]  the map $t \rightarrow \frac{f(t)}{|t|}$ is nondecreasing for all $t \in \R \setminus \{0\}$.

\end{enumerate}

\begin{remark}
Conditions $(V_1)$ and $(V_2)$ were first proposed by Bartsch and Wang in \cite{BW1}. In that paper they proved the existence of a least energy
solution for $\lambda$ large enough. Furthermore,  the sequence of least energy solutions converges strongly to a least energy solution for a problem in bounded domain.
% Furthermore, if the zero sets $int \big(V^{-1}(0)\big)$ include more than one isolated components, Bartsch and Wang \cite{BWBW}, Ding and Tanaka \cite{DT} proved the corresponding solution will be trapped around only one isolated component and become arbitrary small in other components of $int \big(V^{-1}(0)\big)$. Recently,  Alves,  N\'{o}brega and Yang obtained a similar conclusion for Choquard equation with deepen potential well. However, in fractional Laplacian case, when the parameter $\lambda$ large, the corresponding least energy solution will be trapped around all the isolated components of $int \big(V^{-1}(0)\big)$.
\end{remark}

\begin{remark}\label{re}
  It is important to note that from assumption $(f_3)$, we deduce that $f(t)t\geq 2F(t)$. From $(f_1)$ and $(f_3)$ we get $f(t)t>0$ with $t\neq 0$, moreover from $(f_1)$ and continuity, it follows that $f(0)=0$. Thus we get $F(t)\geq0$ for each $t\in \R$.
\end{remark}

\begin{remark}
In the present paper, $f\in \mathcal{C}^{1}(\R,\R)$ is not necessary. Suppose $f$ satisfies $(f_1)$, $(f_2)$  and Ambrosetti-Rabinowitz  condition together with $F(t)>0$, we can relax $f\in \mathcal{C}^{1}(\R,\R)$ to $f\in \mathcal{C}(\R,\R)$ and still obtain the existence of the least energy  solution to equation \eqref{eqs1.1} by constraint minimization on Nehari manifold, furthermore we show the sequence of  solutions (least energy solutions) converges to a solution (least energy solution) to the ``limit problem".
\end{remark}
Before stating our main results, we introduce some useful notations and definitions.

The fractional Sobolev space
	$H^{s}(\R^N)$ is defined as follows
	\begin{equation*}
	H^{s}(\R^{N})=\Bigg\{
	u\in L^2({\R^{N}})\ |\  \int_{\R^N}\int_{\R^N} \frac{|u(x)-u(y)|^2}
	{|x-y|^{N+2s}}dxdy<\infty
	\Bigg\}
	\end{equation*}
	equipped with the inner product
	\begin{equation*}
	\langle u,v\rangle=
	\int_{\R^N}\int_{\R^{N}} \frac{(u(x)-u(y))(v(x)-v(y))}
	{|x-y|^{N+2s}}dxdy+\int_{\R^N}uv dx
	\end{equation*}
	and the corresponding norm
	\begin{equation*}
	\|u\|_{s}=\Bigg(
	  \int_{\R^{N}} \int_{\R^{N}} \frac{|u(x)-u(y)|^2}
	{|x-y|^{N+2s}}dxdy+\int_{\R^N}|u|^2 dx
	\Bigg)^{\frac12}.
	\end{equation*}
	
The factional Laplacian operator $(-\Delta)^s$ can also be described by means of the Fourier transform, that is,
\begin{equation*}
 \mathcal{F}\big((-\Delta)^{s}u\big)(\xi)=|\xi|^{2s}\mathcal{F}(u)(\xi), \ \ \xi \in \R^N.
 \end{equation*}
where $\mathcal{F}$ denotes the Fourier transform.
It follows that, in view of Proposition 3.4 and Proposition 3.6 in \cite{DPV} that
$$
\int_{\R^3}|(-\Delta)^{\frac{s}{2}}u|^{2}dx=\int_{\R^3}|\xi|^{2s}|\mathcal{F}(u)(\xi)|^{2}d\xi
=\frac{1}{2}C(N,s)\int_{\R^N}\int_{\R^N}\frac{|u(x)-u(y)|^2}{|x-y|^{N+2s}}dxdy,
$$
for all $u\in H^{s}(\R^N)$.

It is well known that $(H^{s}(\R^{N}),\|\cdot\|_{s})$ is a uniformly convex Hilbert space and the embedding $H^{s}(\R^{N})  \hookrightarrow L^{q}(\R^{N})$ is continuous for any
	$q \in [2,2_{s}^{*}]$, where $2_{s}^{*}=\frac{2N}{N-2s}$ for $N>2s$ and $2_{s}^{*}=+\infty$ for $N\leq 2s$(See \cite{DPV}).

    To solve the problem \eqref{eqs1.1}, we will use a method due to Caffarelli and Silvestre in \cite{CS}. %In that paper, it was developed a local interpretation of the fractional Laplacian given in $\R^N$ by considering a Dirichlet to Neumann type operator in the domain $\R_{+}^{N+1}=\{(x,t)\in \R^{N+1}:t>0\}$.
    For $u\in H^{s}$, the solution $w\in X^{s}=X^{s}(\R_{+}^{N+1})$ of
    \begin{equation}
      \begin{cases}
        -div(y^{1-2s}\nabla w)=0 ~~&\text{in}~~ \R_{+}^{N+1}, \\
        w=u  &\text{on}~~ \R^{N}\times \{0\}
      \end{cases}
    \end{equation}
is called s-harmonic extension of $u$, denoted by $w=E_{s}(u)$ and it is proved in \cite{CS} that
$$
(-\Delta)^{s}u=-\frac{1}{k_s}\lim_{y\rightarrow 0^+}y^{1-2s}\frac{\partial w}{\partial y}(x,y),
$$
where
$$
k_{s}=2^{1-2s}\frac{\Gamma(1-s)}{\Gamma (s)}.
$$
We denote   the space $X^{s}(\R_{+}^{N+1})$ as the completion of $C_{0}^{\infty}(\overline{\R_{+}^{N+1}})$ under the norm
$$
\|w\|_{X^s}:=\left(k_{s}\int_{\R_{+}^{N+1}}y^{1-2s}|\nabla w|^2 dxdy\right)^{\frac{1}{2}}.
$$
It is important to point out that the embedding $X^{s}(\R_{+}^{N+1})\hookrightarrow L^{2_{s}^{*}}(\R^N)$ is continuous(see \cite{BCD}).  Thus motivated by the  approach problem above, we will study the existence of  least energy solutions for the following problem
\begin{equation}\label{eqs1.6}
  \begin{cases}
    -div (y^{1-2s}\nabla w)=0 ~~\text{in}~~\R_{+}^{N+1}, \\
    \frac{\partial w}{\partial \nu}= -\lambda V(x)w+\big(I_{\alpha}\ast F(w)\big)f(w) ~~ \text{on}~~ \R^{N}\times \{0\},
  \end{cases}
\end{equation}
 where $\displaystyle \frac{\partial w}{\partial \nu}=-\frac{1}{k_s}\lim_{y\rightarrow 0^+}y^{1-2s}\frac{\partial w}{\partial y}(x,y)$.  From now on, we will omit the constant $k_{s}$ for convenient. Thus, if  $w\in X^{s}(\R_{+}^{N+1})$ is a solution to  problem \eqref{eqs1.6}, then the function $u(x)=w(x,0)$ will be a solution to equation \eqref{eqs1.1}.

In what follows, we define
$$
E:=\Big\{ w \in X^{s}(\R_{+}^{N+1})\mid \int_{\R^N}|w(x,0)|^2 dx< \infty\Big\}
$$
with norm
$$
\|w\|=\left( \int_{\R_{+}^{N+1}} y^{1-2s}|\nabla w|^{2}dxdy +\int_{\R^N}  |w(x,0)|^{2}dx\right)^{\frac{1}{2}}.
$$
Furthermore, $tr_{\R^N} E=H^{s}(\R^N)$, where $tr_{\R^N}E:=\{w(x,0)\mid w(x,y)\in E\}$. The embedding $E\hookrightarrow L^{q}(\R^N)$ is continuous with $2\leq q \leq 2_{s}^{*}$ and  the embedding $E\hookrightarrow L_{loc}^{q}(\R^N)$  is compact with $2\leq q < 2_{s}^{*}$  (see \cite{BCD}).

In this paper, we are looking for the least energy solution in the Hilbert space
$$
E_{\lambda}:=\Big\{ w \in X^{s}(\R_{+}^{N+1}): \int_{\R^N}\lambda V(x)|w(x,0)|^2 dx< \infty\Big\}
$$
endowed with norm
$$
\|w\|_{\lambda}=\left( \int_{\R_{+}^{N+1}} y^{1-2s}|\nabla w|^{2}dxdy +\int_{\R^N} \lambda V(x) |w(x,0)|^{2}dx\right)^{\frac{1}{2}}.
$$
Associated with \eqref{eqs1.6}, we have the energy functional $J_{\lambda}(w):E_{\lambda}\to \R$ defined by
\begin{align*}
J_{\lambda}(w):=\frac{1}{2}\int_{\R_{+}^{N+1}}y^{1-2s}|\nabla w|^{2}dxdy &+\frac{\lambda}{2} \int_{\R^N} V(x)|w(x,0)|^{2}dx \\
&-\frac{1}{2}\int_{\R^N}\Big(I_{\alpha}\ast F(w(x,0))\Big)F(w(x,0))dx.
\end{align*}
It is not difficult to find that $J_{\lambda}(w)\in \mathcal{C}^{1}(E_{\lambda},\R)$ with Gateaux derivative given by
\begin{align*}
\langle J'_{\lambda}(w), \varphi\rangle :=\int_{\R_{+}^{N+1}}y^{1-2s}\nabla w \nabla \varphi dxdy &+ \int_{\R^N} \lambda V(x)w(x,0)\varphi (x,0)dx \\
&-\int_{\R^N}\Big(I_{\alpha}\ast F(w(x,0))\Big)f(w(x,0))\varphi(x,0)dx, ~~\forall \varphi \in E_{\lambda}.
\end{align*}

\begin{definition}\label{D1.1}
    We say that $w\in E_{\lambda}$ is a weak solution to equation \eqref{eqs1.6}, if
	\begin{equation}
	\int_{\R_{+}^{N+1}}y^{1-2s}
	\nabla w\nabla\varphi dxdy\notag\\
	+\int_{\R^N}\lambda V(x)w(x,0)\varphi(x,0) dx
	=\int_{\R^N}\Big(I_{\alpha}\ast F(w(x,0))\Big)f(w(x,0))\varphi(x,0) dx
	\end{equation}
for all $\varphi \in E_{\lambda}$.
\end{definition}

In order to prove the existence of the least energy solutions to problem \eqref{eqs1.1}, we consider the following constraint minimization problem
$$
c_{\lambda}:=\inf_{w\in \mathcal{N}_{\lambda}}J_{\lambda}(w),
$$
where $\mathcal{N}_{\lambda}:=\left\{ v \in E_{\lambda}\setminus \{0\}: \langle J'_{\lambda}(w),w \rangle =0\right\}$ is the Nehari manifold.

For $\lambda$ large, the following problem
\begin{equation}\label{eqs1.7}
\begin{cases}
  (-\Delta)^{s}u=\left(\displaystyle\int_{\Omega}\frac{F(u(z))}{|x-z|^{N-\alpha}}dz\right)f(u),~~\text{in}~~\Omega, \\
  u\neq 0,~~\text{in}~~\Omega, \\
  u=0,~~\text{in}~~ \R^{N}\setminus \Omega.
\end{cases}
\end{equation}
can be seen as the limit problem of equation \eqref{eqs1.1}. In this paper, one of our aims is to prove that there exists a sequence of least energy solutions to equation \eqref{eqs1.1} converges to a least energy solution to equation \eqref{eqs1.7}. Similarly, we will study the following problem in a half space $\R_{+}^{N+1}$,
\begin{equation}\label{eqs1.8}
  \begin{cases}
    -div (y^{1-2s}\nabla w)=0 ~~\text{in}~~\R_{+}^{N+1}, \\
    \frac{\partial w}{\partial \nu}=\displaystyle \left(\int_{\Omega} \frac{F(w(z))}{|x-z|^{N-\alpha}}dz\right)f(w) ~~ \text{on}~~ \Omega\times \{0\}, \\
    w=0 ~~\text{on}~~ \R^{N} \setminus \Omega \times \{0\}.
  \end{cases}
\end{equation}
 It is obvious that if $w$ is the solution to equation \eqref{eqs1.8}, then the trace $w(x,0)$ will be a solution to equation \eqref{eqs1.7}.
In order to solve the problem \eqref{eqs1.8}, we work on a subspace $E_{0}$ of $E_{\lambda}$  defined as follows
$$
E_{0}:=\left\{ w(x,y)\in E \mid w(x,0)=0~~\text{in}~~ \R^N \setminus  \Omega \right\}.
$$
Furthermore, we define the energy functional associated with equation \eqref{eqs1.8}  by
$$
J_{0}(w):=\frac{1}{2}\int_{\R_{+}^{N+1}}y^{1-2s}|\nabla w|^{2}dxdy -\frac{1}{2}\int_{\Omega}\int_{\Omega}\frac{F(w(x,0))F(w(z,0))}{|x-z|^{N-\alpha}}dxdz.
$$
\begin{definition}\label{D1.2}
    We say that $w\in E_{0}$ is a weak solution to equation \eqref{eqs1.8}, if
	\begin{equation}
	\int_{\R_{+}^{N+1}}y^{1-2s}\nabla w \nabla \psi dxdy-\int_{\Omega}\int_{\Omega}\frac{F(w(x,0))f(w(z,0))\psi(z,0)}{|x-z|^{N-\alpha}}dxdz=0
	\end{equation}
for all $\psi \in E_{0}$.
\end{definition}
Comparing with the Nehari manifold $\mathcal{N}_{\lambda}$, we define the Nehari manifold
$$
\mathcal{N}_{0}:=\left\{ w \in E_{0}\setminus \{0\}\mid \langle J_{0}(w), w\rangle=0\right\}
$$
and
$$
c(\Omega):=\inf_{v\in \mathcal{N}_{0}} J_{0}(w)
$$
be the infimum of $J_0$ on the Nehari manifold $\mathcal{N}_{0}$.
\begin{definition}\label{D1.1}
    We call $u_{\lambda}=w_{\lambda}(x,0)$ is a least energy solution to equation \eqref{eqs1.1}, if $c_{\lambda}$ is achieved by $w_{\lambda}\in \mathcal{N}_{\lambda}$, when $w_{\lambda}$ is the critical point of $J_{\lambda}$. Similarly we say $u_0=w_0(x,0)$ is a least energy solution to equation \eqref{eqs1.7}, if $c(\Omega)$ is achieved by $w_0 \in \mathcal{N}_{0}$ which is the critical point of $J_{0}$.
\end{definition}

Then, our results can be stated as below.
\begin{thm}\label{th1.1}
	Let $N> 2s$, $\alpha \in \big((N-4s)^{+},N\big)$, where $(N-4s)^{+}=\max\{0,N-4s\}$, suppose  $(V_{1})-(V_{2})$ and $(f_1)-(f_4)$ hold. Then for $\lambda$ large enough, the problem \eqref{eqs1.1} possesses a least energy solution $u_{\lambda}(x)=w_{\lambda}(x,0)$. Furthermore, for any sequence $\lambda_{n}\rightarrow +\infty$, $\{u_{\lambda_n}(x)\}$ converges to a least energy solution to equation \eqref{eqs1.7} in $H^s (\R^N)$, up to a subsequence.
\end{thm}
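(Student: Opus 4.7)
The plan is to establish existence by minimization on the Nehari manifold $\mathcal{N}_\lambda$ for fixed large $\lambda$, and then analyze the limit $\lambda_n\to\infty$ by exploiting the penalty structure of the term $\lambda V(x)|w|^2$ together with the potential well condition.

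First, I would verify that $J_\lambda$ has the Nehari/mountain-pass geometry. Using $(f_1), (f_2)$ and the Hardy--Littlewood--Sobolev inequality (whose admissibility forces exactly the restriction $\alpha>(N-4s)^+$, so that the resulting exponents land in the subcritical range $[2,2_s^*)$), one controls $\int (I_\alpha\ast F(w))F(w)dx$ by a subcritical power of $\|w\|_\lambda$. Combined with $(f_3)$, which ensures that the fiber map $t\mapsto J_\lambda(tw)$ has a unique positive maximum $t_w$, this yields $t_w w\in\mathcal{N}_\lambda$ for every $w\in E_\lambda\setminus\{0\}$ and a uniform lower bound $c_\lambda\geq\delta_0>0$, with $\delta_0$ depending only on Sobolev/HLS constants.

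Next, existence for fixed large $\lambda$. For a minimizing sequence $\{w_n\}\subset\mathcal{N}_\lambda$, boundedness in $\|\cdot\|_\lambda$ follows from the identity $c_\lambda+o(1)=J_\lambda(w_n)-\tfrac12\langle J'_\lambda(w_n),w_n\rangle$ combined with $f(t)t-2F(t)\geq 0$ (see Remark \ref{re}) and the subcritical HLS control. The decisive ingredient is the compactness of the embedding $E_\lambda\hookrightarrow L^q(\R^N)$ for $2\leq q<2_s^*$ for all $\lambda$ large, which follows from $(V_2)$ by the standard splitting argument: on $\{V\leq M\}$ (of finite measure) use local Rellich for $E\hookrightarrow L^q_{loc}$, and on $\{V>M\}$ use $\lambda M\int|w|^2\leq\|w\|_\lambda^2$ to make the tail small. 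Extracting a weak limit $w_\lambda$ and applying HLS together with a Brezis--Lieb decomposition for the Choquard term shows that $w_\lambda\in\mathcal{N}_\lambda$ and achieves $c_\lambda$. A standard Nehari argument (using $(f_3)$ for strict monotonicity) then promotes $w_\lambda$ to a critical point of $J_\lambda$.

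For the asymptotics, pick any $w_*\in\mathcal{N}_0$ with $J_0(w_*)=c(\Omega)$; since $w_*(\cdot,0)$ is supported in $\overline{\Omega}$ where $V\equiv 0$, one has $w_*\in E_{\lambda_n}$ with $\|w_*\|_{\lambda_n}=\|w_*\|$ independent of $n$, and projecting onto $\mathcal{N}_{\lambda_n}$ yields $c_{\lambda_n}\leq c(\Omega)$ uniformly. Hence $\{w_{\lambda_n}\}$ is bounded in $E$, and from $\int\lambda_n V(x)|w_{\lambda_n}(x,0)|^2dx\leq 2c(\Omega)$, Fatou gives $\int V|w_0(\cdot,0)|^2dx=0$ for any weak limit $w_0$, so $w_0\in E_0$. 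The compactness established above, together with HLS, lets one pass to the limit in $J'_{\lambda_n}(w_{\lambda_n})=0$ to conclude that $w_0$ solves \eqref{eqs1.8}. Once $w_0\not\equiv 0$ is known, $w_0\in\mathcal{N}_0$, so $c(\Omega)\leq J_0(w_0)\leq\liminf J_{\lambda_n}(w_{\lambda_n})\leq c(\Omega)$, forcing equality. Strong convergence in $H^s(\R^N)$ then follows by comparing the squared norms via these energy identities and the Brezis--Lieb lemma applied to the nonlocal term.

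The main obstacles I expect are two: (i) handling the nonlocal Choquard nonlinearity when only local Sobolev compactness is available, which forces a careful interplay between HLS, the potential-well compactness, and a Brezis--Lieb splitting adapted to the double convolution; and (ii) excluding vanishing of $w_0$ in the asymptotic step, which requires a uniform positive lower bound on $\|w_{\lambda_n}\|$ inherited from $w_{\lambda_n}\in\mathcal{N}_{\lambda_n}$ together with a concentration argument preventing mass from escaping into the region $\{V>0\}$, where the penalty $\lambda_n V$ blows up.
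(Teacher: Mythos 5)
Your overall strategy coincides with the paper's: minimization over $\mathcal{N}_\lambda$, the uniform bound $c_\lambda\le c(\Omega)$ obtained by testing with a minimizer of the limit problem, Fatou's lemma applied to $\lambda_n\int V|w_{\lambda_n}(\cdot,0)|^2$ to force the weak limit into $E_0$, a Br\'ezis--Lieb splitting of the Choquard term, and a concentration argument excluding vanishing so as to upgrade to strong $L^q$ convergence and identify the limit energy with $c(\Omega)$. The restriction $\alpha>(N-4s)^+$ enters exactly where you say it does.

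The one step that would fail as written is your ``decisive ingredient'': the embedding $E_\lambda\hookrightarrow L^q(\R^N)$, $2\le q<2_s^*$, is \emph{not} compact for any fixed $\lambda$, however large. Under $(V_1)$--$(V_2)$ the potential vanishes on $\Omega$ and is merely bounded below by $M$ off a finite-measure set, so for a sequence bounded in $E_\lambda$ your splitting yields only $\int_{\{V>M\}\setminus B_R}|w_n(x,0)|^2\,dx\le \|w_n\|_\lambda^2/(\lambda M)$, a fixed constant that does not tend to $0$ as $R\to\infty$ for fixed $\lambda$; if $V$ is bounded, translates of a fixed bump along an unbounded region where $V>M$ give an explicit bounded, non-precompact sequence. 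What is true, and what the paper actually proves (Lemma \ref{lemma2.7} and Proposition \ref{pro2.1}), is a $(PS)_c$ condition valid for $c\le\widetilde C$ and $\lambda\ge\Lambda_0(\widetilde C)$: the tail is bounded by $4\widetilde C/(\lambda M)+o_n(1)$, small only because the energy bound is uniform and $\lambda$ is chosen large \emph{relative to it}. Your argument is repairable, since the sequences you use do satisfy $\|w_n\|_\lambda^2\le 4c(\Omega)+o_n(1)$ with a $\lambda$-independent right-hand side, but the compactness must be stated at the level of these bounded-energy sequences, not as a property of the space. The remaining ingredients (the Nehari geometry and the uniform lower bound, the exclusion of vanishing via $(V_2)$ together with $\int V|w_{\lambda_n}(\cdot,0)|^2\to 0$, and the pinching $c(\Omega)\le J_0(w_0)\le\liminf J_{\lambda_n}(w_{\lambda_n})\le c(\Omega)$ followed by norm comparison for strong convergence) match the paper's Lemmas \ref{lemma2.4}--\ref{lemma2.8} and \ref{lemma4.1}.
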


Not only the least energy solution to equation \eqref{eqs1.1} has a convergent property but also any solution to equation \eqref{eqs1.1} does.  Our results on this part can be stated as follows.
\begin{thm}\label{th1.2}
	Under the same assumptions of Theorem \ref{th1.1},  let $\{u_{\lambda_n}:=w_{\lambda_n}(x,0)\}$ be a sequence of solutions to equation \eqref{eqs1.1} with $\lambda$ being replaced by $\lambda_n $ ($\lambda_n \to \infty$ as $n \to \infty$), where $w_{\lambda_n}$ denote by the s-harmonic extension of $u_{\lambda_n}$  such that $\displaystyle\limsup_{n\to \infty}
 J_{\lambda_n}(w_{\lambda_n})<\infty$. Then $u_{\lambda_n}$ converges strongly in $H^{s}(\R^N)$ to a solution to equation \eqref{eqs1.7} up to a subsequence.
\end{thm}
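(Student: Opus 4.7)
The plan is a concentration--compactness argument tailored to the potential-well structure, carried out in the $s$-harmonic extension formulation rather than directly with $(-\Delta)^s$. The first step is to obtain uniform boundedness of $\{w_{\lambda_n}\}$ in $E_{\lambda_n}$. Since $u_{\lambda_n}$ solves \eqref{eqs1.1}, the extension satisfies $\langle J'_{\lambda_n}(w_{\lambda_n}),w_{\lambda_n}\rangle=0$, so
\begin{equation*}
\begin{aligned}
J_{\lambda_n}(w_{\lambda_n}) &= J_{\lambda_n}(w_{\lambda_n})-\tfrac12\langle J'_{\lambda_n}(w_{\lambda_n}),w_{\lambda_n}\rangle \\
&= \tfrac12\int_{\R^N}\bigl(I_\alpha\ast F(w_{\lambda_n}(x,0))\bigr)\bigl[f(w_{\lambda_n}(x,0))w_{\lambda_n}(x,0)-F(w_{\lambda_n}(x,0))\bigr]dx.
\end{aligned}
\end{equation*}
By $(f_3)$, $f(t)t\ge 2F(t)$, so $f(t)t-F(t)\ge F(t)\ge 0$ (Remark \ref{re}). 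Combined with the identity $\|w_{\lambda_n}\|_{\lambda_n}^2=2J_{\lambda_n}(w_{\lambda_n})+\int(I_\alpha\ast F(w_{\lambda_n}))F(w_{\lambda_n})\,dx$ and the hypothesis $\limsup J_{\lambda_n}(w_{\lambda_n})<\infty$, this yields $\|w_{\lambda_n}\|_{\lambda_n}^2\le 4J_{\lambda_n}(w_{\lambda_n})\le C$. Splitting $\R^N$ by $\{V<M\}$ and $\{V\ge M\}$, using $(V_2)$, H\"{o}lder, and the embedding $X^s\hookrightarrow L^{2_s^\ast}$, one bounds $\|w_{\lambda_n}(\cdot,0)\|_{L^2(\R^N)}$ uniformly, so $\{w_{\lambda_n}\}$ is bounded in $E$.

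Passing to a subsequence, $w_{\lambda_n}\rightharpoonup w_0$ in $E$ (and traces weakly in $H^s(\R^N)$). Since $\int V|w_{\lambda_n}(x,0)|^2\,dx\le C/\lambda_n\to 0$, Fatou forces $w_0(x,0)=0$ a.e.\ on $\R^N\setminus\bar\Omega$, hence $w_0\in E_0$. I would then upgrade the traces to strong $L^q(\R^N)$ convergence for every $q\in[2,2_s^\ast)$ via the standard three-piece decomposition: on $B_R$ the compact embedding $E\hookrightarrow L^q_{loc}$ applies; on $B_R^c\cap\{V>M\}$ the $L^2$ mass is controlled by $C/(\lambda_n M)$; on $B_R^c\cap\{V\le M\}$ the finite-measure hypothesis $(V_2)$ together with H\"{o}lder and Sobolev makes the contribution arbitrarily small as $R\to\infty$. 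Interpolation between $L^2$ and $L^{2_s^\ast}$ then extends the conclusion to all $q<2_s^\ast$.

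With global $L^q$ compactness in hand I would pass to the limit in the weak formulation of \eqref{eqs1.6} tested against an arbitrary $\psi\in E_0$. The potential term $\int\lambda_n V(x)w_{\lambda_n}(x,0)\psi(x,0)\,dx$ vanishes identically because $\psi(\cdot,0)$ is supported in $\bar\Omega$ where $V\equiv 0$; the gradient term passes by weak convergence in $X^s$; and the nonlocal Choquard term passes by the Hardy--Littlewood--Sobolev inequality combined with the growth estimates on $F$ and $f$ from $(f_1)$--$(f_2)$ and the strong $L^q$ convergence established above. The admissible ranges $(N-4s)^+<\alpha<N$ and $1<p<(\alpha+2s)/(N-2s)$ are exactly what place the relevant HLS exponents $4N/(N+\alpha)$ and $2N(p+1)/(N+\alpha)$ inside $[2,2_s^\ast]$. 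Hence $w_0$ weakly solves \eqref{eqs1.8} and $u_0=w_0(\cdot,0)$ solves \eqref{eqs1.7}.

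For strong convergence, testing \eqref{eqs1.6} against $w_{\lambda_n}$ and \eqref{eqs1.8} against $w_0$ gives $\|w_{\lambda_n}\|_{\lambda_n}^2=\int(I_\alpha\ast F(w_{\lambda_n}))f(w_{\lambda_n})w_{\lambda_n}\,dx$ and $\|w_0\|_{X^s}^2=\int(I_\alpha\ast F(w_0))f(w_0)w_0\,dx$. The right-hand sides converge by the $L^q$ compactness and HLS, so $\|w_{\lambda_n}\|_{\lambda_n}^2\to\|w_0\|_{X^s}^2$. Writing $\|w_{\lambda_n}\|_{\lambda_n}^2=\|w_{\lambda_n}\|_{X^s}^2+\lambda_n\int V|w_{\lambda_n}(\cdot,0)|^2\,dx$ and invoking weak lower semicontinuity of $\|\cdot\|_{X^s}$ forces both $\lambda_n\int V|w_{\lambda_n}(\cdot,0)|^2\,dx\to 0$ and $\|w_{\lambda_n}\|_{X^s}\to\|w_0\|_{X^s}$. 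Norm convergence plus weak convergence in the Hilbert space $X^s$ yields strong convergence in $X^s$, which together with the $L^2(\R^N)$ convergence of traces gives $u_{\lambda_n}\to w_0(\cdot,0)$ strongly in $H^s(\R^N)$. The main technical hurdle is precisely the global $L^q$ compactness in the second step: without the joint action of $(V_1)$--$(V_2)$ and the diverging parameter $\lambda_n$, mass could leak out along the zero set $\{V=0\}$ and the nonlocal Choquard integral could not be passed to the limit.
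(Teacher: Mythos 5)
Your proof is correct and follows the same overall skeleton as the paper's: uniform bound on $\|w_{\lambda_n}\|_{\lambda_n}$ from the energy identity and $f(t)t\ge 2F(t)$, localization of the weak limit in $E_0$ via Fatou and $\int V|w_{\lambda_n}(\cdot,0)|^2\le C/\lambda_n$, global $L^q$ compactness of the traces, passage to the limit in the weak formulation, and finally strong convergence by comparing $\|w_{\lambda_n}\|_{\lambda_n}^2$ with $\|w_0\|_{X^s}^2$. The two places where you deviate are worth noting. For the global $L^q$ compactness, the paper (in Claim 3 of Lemma 4.1, which the proof of Theorem \ref{th1.2} invokes) rules out vanishing of $v_n=w_{\lambda_n}-w_0$ by a contradiction argument and then appeals to Lions' concentration--compactness lemma, whereas you use the direct three-piece tail decomposition $B_R$, $B_R^c\cap\{V>M\}$, $B_R^c\cap\{V\le M\}$ (essentially the mechanism of the paper's Lemma \ref{lemma2.7}); your version is more self-contained and avoids citing \cite{pl}, at the cost of repeating an estimate the paper has already packaged elsewhere. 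For the final step, the paper expands $\|w_{\lambda_n}-w_0\|_{\lambda_n}^2$ directly and shows it tends to zero, while you deduce $\|w_{\lambda_n}\|_{X^s}\to\|w_0\|_{X^s}$ and $\lambda_n\int V|w_{\lambda_n}(\cdot,0)|^2\to 0$ from weak lower semicontinuity and then use that norm convergence plus weak convergence in a Hilbert space implies strong convergence; the two are equivalent, but your route makes explicit the useful byproduct that the penalization term itself vanishes. You also correctly identify that the hypotheses $\alpha>(N-4s)^+$ and $p<(\alpha+2s)/(N-2s)$ are exactly what keep the Hardy--Littlewood--Sobolev exponents in the admissible range, which the paper uses but does not restate in this proof.
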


The paper is organized as follows. In Section 2, we give some preliminary lemmas, which are crucial in proving the compactness results. In Section 3, we consider the limit problem and give some energy estimations about $J_{\lambda}$ and $J_{0}$. In Section 4, by constraint minimization method, we prove the main results.

\section{\label{Deg} Some preliminary lemmas and compactness results }

In this Section, we first recall the well-known Hardy-Littlewood-Sobolev inequality and give some preliminary lemmas which play  important roles in showing $J_{\lambda}$ satisfies $(PS)_{c}$ condition.

\begin{lemma}\label{HLS}
	(Hardy-Littlewood-Sobolev inequality) \cite{LL}. Suppose  $\alpha \in (0,N)$, and $p$, $r>1$  with $ 1/p+1/r=1+\alpha /N$. Let $g\in L^{p}(\R^N)$, $h\in L^{r}(\R^N)$, there exists a sharp constant $C(p,\alpha,r,N)$, independent of $g$ and $h$,  such that
	\begin{equation*}
	\int_{\R^N}(I_{\alpha}\ast g)hdx  \leq C(p,\alpha,r,N)|g|_{p} |h| _{r}.
	\end{equation*}
where $|\cdot|_{p}=\left(\int_{\R^N}|u|^{p}dx\right)^{\frac{1}{p}}$.
\end{lemma}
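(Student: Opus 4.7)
The plan is to treat this as a classical result quoted from Lieb and Loss and sketch the standard two-step proof via weak-type estimates and interpolation. First I would split the convolution $I_\alpha * g$ by writing
$$
I_\alpha(x) = I_\alpha(x)\chi_{\{|x|\le R\}} + I_\alpha(x)\chi_{\{|x|> R\}},
$$
applying H\"older on each piece, and optimizing in $R$ to obtain the weak-type bound: for $1<p<N/\alpha$, the sublinear operator $T g := I_\alpha * g$ sends $L^p(\R^N)$ into the Marcinkiewicz space $L^{q,\infty}(\R^N)$ with $\tfrac{1}{q} = \tfrac{1}{p} - \tfrac{\alpha}{N}$.

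Next I would apply the Marcinkiewicz interpolation theorem between two such endpoints $(p_0,q_0)$ and $(p_1,q_1)$ to upgrade the weak-type bound to a strong-type bound $\|I_\alpha * g\|_{L^q} \le C\,\|g\|_{L^p}$ on the open range. The stated bilinear form then follows by pairing with $h\in L^r(\R^N)$ via H\"older, using that the relation $\tfrac{1}{p}+\tfrac{1}{r}=1+\tfrac{\alpha}{N}$ is exactly the dual of $\tfrac{1}{q}=\tfrac{1}{p}-\tfrac{\alpha}{N}$ with $\tfrac{1}{q}+\tfrac{1}{q'}=1$ and $q'=r$.

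The main obstacle, if one wanted the sharp constant $C(p,\alpha,r,N)$ rather than just an inequality, would be to identify extremals; this is where Lieb's symmetric decreasing rearrangement argument (together with conformal invariance in the case $p=r$) is required, and this is precisely the route carried out in \cite{LL}. For the purposes of the present paper, however, only the qualitative inequality is needed: it will be applied with $g = F(u)$ and $h = F(u)$ (or $f(u)\varphi$) to control the Choquard nonlinearity $\int (I_\alpha * F(u)) F(u)\,dx$ by Sobolev norms, so the interpolation-based proof sketched above is fully sufficient.
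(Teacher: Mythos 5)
The paper does not prove this lemma at all---it is quoted verbatim from Lieb--Loss \cite{LL}---and your sketch is the standard, correct route to it: the weak-type bound for $g\mapsto I_\alpha * g$ via splitting the kernel and optimizing in $R$, Marcinkiewicz interpolation to get the strong $L^p\to L^q$ bound with $\tfrac1q=\tfrac1p-\tfrac\alpha N$, and H\"older against $h\in L^r$ using $r=q'$. Your caveat is also right that this argument does not produce the \emph{sharp} constant (that requires Lieb's rearrangement/conformal-invariance argument), but only the qualitative inequality is ever used in the paper, so nothing is lost.
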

%The  Lemma below can be seen as a corollary of Lemma \ref{HLS}.
%\begin{lemma}\label{lm2.2}
%	(see \cite{LL})
%	Let  $\alpha \in (0,N)$, then for any $f,g \in L^{\frac{2N}{N+\alpha}}(\R^N)$, we have
%	$$
%	\left( \int_{\R^N}(I_{\alpha}\ast g)hdx\right)^{2} \leq \int_{\R^{N}} (I_{\alpha}\ast g)fdx\cdot\int_{\R^{N}} (I_{\alpha}\ast h)gdx,
%	$$
%	with equality for $h\neq 0$ and $g=Ch$ for some constant $C$.
%\end{lemma}
\begin{lemma}\label{lemma2.2}
  Let $\lambda^{*}>0$ be any fixed constant, $V(x)$ satisfies $(V_1)$ and $(V_2)$.  Then the embedding $E_{\lambda} \hookrightarrow E$ is continuous  for any $\lambda>\lambda^{*}$.
\end{lemma}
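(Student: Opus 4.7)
The plan is to control the $L^2$-norm of the trace $w(\cdot,0)$ by $\|w\|_\lambda$, since the Dirichlet part of both norms is identical. Writing $\|w\|^2 = \int_{\R_+^{N+1}} y^{1-2s}|\nabla w|^2\,dxdy + \int_{\R^N} |w(x,0)|^2\,dx$, only the second summand needs estimating, and I will split $\R^N$ into the region where $V$ is large (exploiting $(V_2)$ via Hölder) and the region where $V$ is small (exploiting the critical trace embedding $X^s(\R_+^{N+1})\hookrightarrow L^{2_s^*}(\R^N)$ recorded earlier).

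Concretely, fix $M>0$ as in $(V_2)$ and set $A=\{x\in\R^N:V(x)\le M\}$, so $\mu(A)<\infty$. On the complement $A^c$ we have $V(x)>M$, so for any $\lambda>\lambda^*$,
\begin{equation*}
\int_{A^c}|w(x,0)|^2\,dx \le \frac{1}{\lambda M}\int_{A^c}\lambda V(x)|w(x,0)|^2\,dx \le \frac{1}{\lambda^* M}\|w\|_\lambda^2.
\end{equation*}
On $A$, Hölder's inequality with exponents $N/(N-2s)$ and $N/(2s)$ gives
\begin{equation*}
\int_{A}|w(x,0)|^2\,dx \le \mu(A)^{2s/N}\left(\int_{\R^N}|w(x,0)|^{2_s^*}\,dx\right)^{2/2_s^*},
\end{equation*}
and by the continuous trace embedding $X^s(\R_+^{N+1})\hookrightarrow L^{2_s^*}(\R^N)$ referenced in the introduction, the right-hand side is bounded by $C\,\mu(A)^{2s/N}\int_{\R_+^{N+1}} y^{1-2s}|\nabla w|^2\,dxdy \le C\,\mu(A)^{2s/N}\|w\|_\lambda^2$.

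Adding the two estimates,
\begin{equation*}
\int_{\R^N}|w(x,0)|^2\,dx \le \left(\frac{1}{\lambda^* M}+C\,\mu(A)^{2s/N}\right)\|w\|_\lambda^2,
\end{equation*}
and therefore $\|w\|^2 \le K\|w\|_\lambda^2$ with $K=1+\frac{1}{\lambda^* M}+C\mu(A)^{2s/N}$ independent of $\lambda>\lambda^*$, which is the claimed continuous embedding. The only real subtlety is ensuring the constant is uniform in $\lambda$, which is why I use $\lambda>\lambda^*$ rather than $\lambda\to\infty$, and why I separate the region $A^c$ (handled by the $\lambda V$-weight) from $A$ (handled by Sobolev on a set of finite measure); both ingredients are already available from $(V_2)$ and the trace inequality, so no genuine obstacle remains.
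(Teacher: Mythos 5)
Your proof is correct and follows essentially the same route as the paper: split $\R^N$ into $\{V\le M\}$ (finite measure by $(V_2)$, handled by H\"older plus the trace embedding $X^s(\R_+^{N+1})\hookrightarrow L^{2_s^*}(\R^N)$) and its complement (handled by the weight $\lambda V\ge \lambda^* M$), yielding a constant uniform in $\lambda>\lambda^*$. The exponent $\mu(A)^{2s/N}$ matches the paper's $\mu(D)^{1-2/2_s^*}$, so there is nothing to add.
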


\begin{proof}
  By the definitions of $E$ and $E_\lambda$, we only need to prove the following estimate
  \begin{equation}\label{eqs2.1}
  \int_{\R^N}|w(x,0)|^{2}dx \leq C\left(\int_{\R_{+}^{N+1}}y^{1-2s}|\nabla w|^{2}dxdy + \lambda \int_{\R^N}V(x)|w(x,0)|^{2}dx\right).
 \end{equation}
We define
$$
D:=\{x\in \R^N\mid V(x)\leq M\}
$$
and
$$
D^{c}:=\{x\in \R^N\mid V(x)> M\}.
$$
Thus for any function $w\in E_{\lambda}$ and $\lambda>\lambda^{*}$, we get
\begin{align}\label{eqs2.3}
  \int_{D^c}|w(x,0)|^{2}dx&\leq\frac{1}{\lambda^{*}M}\int_{D^c}\lambda V(x)|w(x,0)|^{2}dx \nonumber \\
  &\leq\frac{1}{\lambda^{*}M}\int_{\R^N}\lambda V(x)|w(x,0)|^{2}dx
\end{align}
and
\begin{align}\label{eqs2.4}
  \int_{D}|w(x,0)|^{2}dx & \leq  \mu(D)^{1-\frac{2}{2_{s}^{*}}}\left(\int_{\R^N}|w(x,0)|^{2_{s}^{*}}dx\right)^{\frac{2}{2_{s}^{*}}} \nonumber \\
  &\leq C \int_{\R_{+}^{N+1}}y^{1-2s}|\nabla w|^{2}dxdy,
\end{align}
which follows by $(V_2)$ and continuous embedding $X^{s}(\R_{+}^{N+1})\hookrightarrow L^{2_{s}^{*}}(\R^N)$. Thus by \eqref{eqs2.3} and
\eqref{eqs2.4}, we get \eqref{eqs2.1} and complete the proof.
\end{proof}

\begin{lemma}\label{lemma2.4}
Let $K_{\lambda}$ be the set of nonzero critical points for $J_{\lambda}$ with $\lambda \geq \lambda^*>0$. Then there exists a constant $\sigma_{0}>0$ independent of $\lambda$, such that
$$\|w\|_{\lambda}\geq \sigma_0,~~~~\forall w\in K_{\lambda}.$$
\end{lemma}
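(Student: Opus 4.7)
The plan is to use the Nehari-type identity satisfied by any critical point of $J_\lambda$ and bound the right-hand side using Hardy-Littlewood-Sobolev together with the subcritical growth of $f$. Concretely, for $w \in K_\lambda$, testing $J'_\lambda(w)=0$ against $w$ itself yields
\[
\|w\|_{\lambda}^{2} \;=\; \int_{\R^{N}} \bigl(I_{\alpha} \ast F(w(\cdot,0))\bigr)\, f(w(x,0))\, w(x,0)\, dx.
\]
I would then apply Lemma \ref{HLS} with exponents $p = r = \frac{2N}{N+\alpha}$ (which satisfies $1/p+1/r = 1+\alpha/N$) to obtain
\[
\|w\|_{\lambda}^{2} \;\leq\; C\, \bigl|F(w(\cdot,0))\bigr|_{\frac{2N}{N+\alpha}} \, \bigl|f(w(\cdot,0))\, w(\cdot,0)\bigr|_{\frac{2N}{N+\alpha}}.
\]

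Next I would exploit hypotheses $(f_1)$ and $(f_2)$ together with continuity of $f$ to obtain, for every $\varepsilon>0$, a constant $C_{\varepsilon}>0$ such that
\[
|f(t)| \leq \varepsilon |t| + C_{\varepsilon} |t|^{p}, \qquad |F(t)| \leq \varepsilon |t|^{2} + C_{\varepsilon} |t|^{p+1}, \quad t \in \R.
\]
Consequently
\[
\bigl|F(w(\cdot,0))\bigr|_{\frac{2N}{N+\alpha}} + \bigl|f(w(\cdot,0))\,w(\cdot,0)\bigr|_{\frac{2N}{N+\alpha}} \;\leq\; C\Bigl( \varepsilon |w(\cdot,0)|_{\frac{4N}{N+\alpha}}^{2} + C_{\varepsilon} |w(\cdot,0)|_{\frac{2N(p+1)}{N+\alpha}}^{p+1}\Bigr).
\]
The assumptions $(N-4s)^{+} < \alpha < N$ and $1 < p < \frac{\alpha+2s}{N-2s}$ ensure that both exponents $\frac{4N}{N+\alpha}$ and $\frac{2N(p+1)}{N+\alpha}$ lie in the admissible Sobolev range $[2, 2_{s}^{*}]$, so the embedding $H^{s}(\R^{N}) \hookrightarrow L^{q}(\R^{N})$ plus Lemma \ref{lemma2.2} give $|w(\cdot,0)|_{q} \leq C \|w\|_{E} \leq C \|w\|_{\lambda}$ with a constant depending only on $\lambda^{*}$ (not on $\lambda \geq \lambda^{*}$).

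Combining these estimates yields an inequality of the form
\[
\|w\|_{\lambda}^{2} \;\leq\; C\bigl( \varepsilon \|w\|_{\lambda}^{2} + C_{\varepsilon}\|w\|_{\lambda}^{p+1}\bigr)^{2},
\]
and after fixing $\varepsilon$ small and dividing by $\|w\|_{\lambda}^{2}>0$ one gets
\[
1 \;\leq\; C_{1}\|w\|_{\lambda}^{2} + C_{2}\|w\|_{\lambda}^{p+1} + C_{3}\|w\|_{\lambda}^{2p},
\]
whose right-hand side vanishes as $\|w\|_{\lambda}\to 0$ since $p>1$; this forces the desired uniform lower bound $\|w\|_{\lambda}\geq \sigma_{0}>0$. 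The only subtlety, and the one point that really needs checking, is the $\lambda$-uniformity: all constants involved arise from HLS, the Sobolev embedding $X^{s}(\R_{+}^{N+1})\hookrightarrow L^{2_{s}^{*}}$, and the constant produced in Lemma \ref{lemma2.2}, each of which depends on the fixed parameters $N,s,\alpha,p,\lambda^{*}$ but not on $\lambda$ itself, so $\sigma_{0}$ can indeed be taken independent of $\lambda\geq \lambda^{*}$.
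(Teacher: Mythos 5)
Your proposal is correct and follows essentially the same route as the paper: both start from the Nehari identity $\langle J'_\lambda(w),w\rangle=0$, estimate the nonlocal term via Hardy--Littlewood--Sobolev combined with the growth bounds from $(f_1)$--$(f_2)$, and pass from $\|w\|$ to $\|w\|_\lambda$ using Lemma \ref{lemma2.2} to obtain $\|w\|_{\lambda}^{2}\leq C_1\|w\|_{\lambda}^{4}+C_2\|w\|_{\lambda}^{p+3}+C_3\|w\|_{\lambda}^{2(p+1)}$, which forces the uniform lower bound. Your explicit verification that the exponents $\tfrac{4N}{N+\alpha}$ and $\tfrac{2N(p+1)}{N+\alpha}$ fall in $[2,2_s^*]$ under $(N-4s)^+<\alpha<N$ and $p<\tfrac{\alpha+2s}{N-2s}$ is a detail the paper leaves implicit, and your remark on the $\lambda$-uniformity of the constants matches the paper's appeal to Lemma \ref{lemma2.2}.
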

\begin{proof}
Suppose $w\in K_{\lambda}$, that is $w\neq 0$ and $w$ is a  critical point of $J_{\lambda}$ with $\lambda\geq \lambda^{*}>0$. Hence combining $(f_1)-(f_2)$ with Hardy-Littlewood-Sobolev inequality, we have
\begin{align*}
0=\langle J'_{\lambda}(w),w\rangle&=\int_{\R_{+}^{N+1}}y^{1-2s}|\nabla w|^{2}dxdy+ \int_{\R^N}\lambda V(x) |w(x,0)|^{2}dx \\
& \ \ \ \ \ \ \ \ \ \ \ \  \ \ \ \ \ \ \ \ \ \ \ \  \ \ \ \ \ \ \ \ \ \ -\int_{\R^N}\Big(I_{\alpha}\ast F(w(x,0))\Big)f(w(x,0))w(x,0)dx \\
&  \geq \|w\|_{\lambda}^{2}-C_1\varepsilon^{2}\|w\|^{4}-C_2 C_{\varepsilon}\|w\|^{2(p+1)}  \\
&  \geq \|w\|_{\lambda}^{2}-C_1\varepsilon^{2}\|w\|_{\lambda}^{4}-C_2 C_{\varepsilon}\|w\|_{\lambda}^{2(p+1)},
\end{align*}
where $C_1$ and $C_2$ are positive constants independent of $\lambda$  and $C_\varepsilon$ is a positive constant depend on $\varepsilon$. In the last inequality, we use the conclusion of Lemma \ref{lemma2.2}.
Thus there exists $\sigma_0>0$ such that $\|w\|_{\lambda}\geq \sigma_0$.
\end{proof}

The following lemma shows that the zero energy level of $(PS)_c$ sequence of $J_\lambda$ is isolated.
\begin{lemma}\label{lemma2.5}
Let $\{w_n\}$ be a $(PS)_{c}$ sequence for $J_{\lambda}$ with $\lambda \geq \lambda^*>0$, then $\{w_n\}$ is bounded. Furthermore, either $c=0$, or there exists a constant $c^{*}>0$ independent of $\lambda$, such that $c\geq  c^{*}$.
\end{lemma}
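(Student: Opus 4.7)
My plan is to split the argument into two pieces: boundedness of $\{w_n\}$, and the alternative for the energy level $c$. The central algebraic input throughout is that $(f_3)$ together with Remark \ref{re} forces $f(t)t\geq 2F(t)$ and $F(t)\geq 0$, which in turn makes $I_\alpha\ast F(w_n)\geq 0$ pointwise; this is exactly the structure one exploits in the Ambrosetti-Rabinowitz-style trick adapted to Choquard problems.

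For boundedness, I would consider the combination $J_\lambda(w_n)-\tfrac12\langle J_\lambda'(w_n),w_n\rangle$. The quadratic piece $\|w_n\|_\lambda^2$ cancels, leaving $\tfrac12\int_{\R^N}(I_\alpha\ast F(w_n))[f(w_n)w_n-F(w_n)]\,dx$, which by $f(t)t-F(t)\geq F(t)\geq 0$ is bounded below by $\tfrac12\int_{\R^N}(I_\alpha\ast F(w_n))F(w_n)\,dx$. Substituting into the identity $\|w_n\|_\lambda^2 = 2J_\lambda(w_n)+\int_{\R^N}(I_\alpha\ast F(w_n))F(w_n)\,dx$ then yields $\|w_n\|_\lambda^2 \leq 4J_\lambda(w_n)-\langle J_\lambda'(w_n),w_n\rangle \leq 4c+o(1)+o(\|w_n\|_\lambda)$, from which the boundedness of $\{w_n\}$ is immediate.

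For the dichotomy, I assume $c\neq 0$ and aim to produce a $\lambda$-independent lower bound. Boundedness and $\|J_\lambda'(w_n)\|_{E_\lambda^*}\to 0$ give the Nehari-type relation $\|w_n\|_\lambda^2=\int_{\R^N}(I_\alpha\ast F(w_n))f(w_n)w_n\,dx+o(1)$. Applying Hardy-Littlewood-Sobolev with $(f_1)$ and $(f_2)$ exactly as in Lemma \ref{lemma2.4} produces $\|w_n\|_\lambda^2\leq C_1\|w_n\|_\lambda^4+C_2\|w_n\|_\lambda^{2(p+1)}+o(1)$ with $C_1,C_2$ independent of $\lambda$. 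If $\|w_n\|_\lambda\to 0$ along a subsequence, then every nonlinear contribution to $J_\lambda(w_n)$ vanishes by the same HLS estimate, forcing $c=0$ and contradicting the assumption. Hence $\liminf_n\|w_n\|_\lambda^2\geq \sigma_0^2$ for some $\lambda$-independent $\sigma_0>0$. To convert this into a bound on $c$, I would again apply $f(t)t\geq 2F(t)$ to the Nehari-type relation, obtaining $\int_{\R^N}(I_\alpha\ast F(w_n))F(w_n)\,dx\leq \tfrac12\|w_n\|_\lambda^2+o(1)$, which plugged back into the formula for $J_\lambda(w_n)$ gives $J_\lambda(w_n)\geq \tfrac14\|w_n\|_\lambda^2-o(1)$; passing to the limit yields $c\geq \tfrac14\sigma_0^2=: c^*$.

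The delicate point I anticipate is the HLS bookkeeping in the Nehari step: one has to confirm that $F(t)$ and $f(t)t$, paired through the kernel $I_\alpha$, land in Lebesgue spaces whose exponents lie in $[2,2_s^*]$ required both by HLS and by the continuous embedding of $E_\lambda$ into $L^q(\R^N)$, so that the resulting $C_1,C_2$ depend only on $\lambda^*$ via Lemma \ref{lemma2.2} and on the fixed growth parameters, not on $\lambda$ itself. This is essentially the same computation carried out in Lemma \ref{lemma2.4}, and it is precisely where the subcritical interplay $\alpha>(N-4s)^+$ and $1<p<\tfrac{\alpha+2s}{N-2s}$ enters; once this is in place, the remaining steps are algebraic.
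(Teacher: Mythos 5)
Your argument is correct and follows essentially the same route as the paper: the boundedness comes from the combination $J_\lambda(w_n)-\tfrac14\langle J_\lambda'(w_n),w_n\rangle\geq\tfrac14\|w_n\|_\lambda^2$ (which your $J_\lambda-\tfrac12\langle J_\lambda',\cdot\rangle$ computation reproduces after substitution), and the dichotomy comes from the Nehari-type relation plus Hardy--Littlewood--Sobolev with $\lambda$-independent constants, followed by $c\geq\tfrac14\liminf\|w_n\|_\lambda^2$. No substantive difference from the paper's proof.
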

\begin{proof}
Suppose $\{w_n\} \subset E_{\lambda} $ is a $(PS)_{c}$ sequence for $J_{\lambda}$, that is
\begin{equation*}
J_{\lambda}(w_n)\rightarrow c \ \ \text{and}\ \ J'_{\lambda}(w_n)\rightarrow 0, \ \ \ \  \text{as} ~n\to+\infty.
\end{equation*}
Then
\begin{equation}\label{eqs2.5}
  \displaystyle J_{\lambda}(w_n)-\frac{1}{4}\langle J'_{\lambda}(w_n), w_n\rangle\leq c+ o_{n}(1)\|w_n\|_{\lambda}.
\end{equation}
Indeed,  since
\begin{equation*}
J_{\lambda}(w_n)=\frac{1}{2}\int_{\R_{+}^{N+1}}y^{1-2s}|\nabla w_n|^{2}dxdy+\frac{1}{2}\int_{\R^N}\lambda V(x)|w_{n}(x,0)|^{2}dx-\frac{1}{2}\int_{\R^N}\Big(I_{\alpha}\ast F(w_n(x,0))\Big)F(w_n(x,0))dx
\end{equation*}
and
\begin{align*}
\langle J'_{\lambda}(w_n), w_n\rangle=\int_{\R_{+}^{N+1}}y^{1-2s}|\nabla w_n|^{2}dxdy&+\int_{\R^N}\lambda V(x)|w_{n}(x,0)|^{2}dx \\
&-\int_{\R^N}\Big(I_{\alpha}\ast F(w_n(x,0))\Big)f(w_n(x,0))w_{n}(x,0)dx,
\end{align*}
thus, by the fact that  $f(t)t\geq 2F(t)\geq 0$ for each $t\in \R$ which is proved in Remark \eqref{re},  we then get
\begin{equation}\label{eqs2-6}
\displaystyle J_{\lambda}(w_n)-\frac{1}{4}\langle J'_{\lambda}(w_n), w_n\rangle
%&=\displaystyle(\frac{1}{2}-\frac{1}{\theta})\int_{\R^N}|\nabla u_n|^{2}+(\lambda a+1)u_{n}^{2}\\
%&+\displaystyle\frac{1}{\theta}\int_{\R^N}(\frac{1}{|x|^\alpha}\ast F(u_n))f(u_n)u_n
%-\displaystyle\frac{1}{2}\int_{\R^N}(\frac{1}{|x|^{\alpha}}\ast F(u_n))F(u_n) \\
\geq \displaystyle\frac{1}{4}\int_{\R_{+}^{N+1}}y^{1-2s}|\nabla w_n|^{2}dxdy + \frac{\lambda}{4}\int_{\R^N} V(x)|w_{n}(x,0)|^{2}dx.
\end{equation}
Therefore, by \eqref{eqs2.5} and \eqref{eqs2-6}, we get
\begin{equation}\label{eqs2.7}
0\leq \frac{1}{4}\|w_n\|_{\lambda}^{2}\leq c+ o_{n}(1)\|w_n\|_{\lambda},
\end{equation}
which implies $\{w_n\}$ is bounded in $E_{\lambda}$.

By \eqref{eqs2.7}, we know $c\geq 0$. %Suppose that $c>0$. By (\ref{eqs4.4}), we know
%\begin{equation}\label{eqs4.16}
%\limsup_{n\rightarrow +\infty}\|u_n\|_{\lambda}^{2}\leq \frac{2\theta c}{\theta-2}.
%\end{equation}
If $c=0$, the proof is completed. Otherwise  $c>0$, since $\left\langle J'_\lambda(w_n), w_n \right\rangle\to0$ as $n \to +\infty$, or equivalently
\begin{equation}\label{new label 2}
    \|w_n\|^2_\lambda=\int_{\R^N}\Big(I_{\alpha}\ast F(w_n(x,0))\Big)f(w_n(x,0))w_n(x,0)dx+o_n(1).
\end{equation}
Using the Hardy-Littlewood-Sobolev inequality, continuous embeddings $E_{\lambda}\hookrightarrow E\hookrightarrow L^{q}(\R^N)$ with $q\in[2,2_{s}^{*}]$ together with assumptions $(f_1)$ and $(f_2)$, we have
\begin{equation}\label{add1}
    \|w_n\|^2_\lambda\leq C_3\max\left\{ \|w_n\|_\lambda^4,~~ \|w_n\|_\lambda^{2p+2}\right\},
\end{equation}
for some $C_3>0$ which is independent of $\lambda$. %If  $\|u_n\|_\lambda\to 0$ or there exists a $\delta>0$ such that $\|u_n\|_\lambda\geq\delta$.

Thus
%\%begin{equation}\label{new label 3}
%\begin{split}
%    J_\lambda(u_n)&=\frac{1}{2}\|u_n\|^2_\lambda-\frac{1}{2}\int_{\R^N}\int_{\R^N}\frac{F(u_n(x))F(u_n(y))}{|x-y|^\alpha}dxdy\\
%&\leq\frac{1}{2}\|u_n\|^2_\lambda+\theta\int_{\R^N}\int_{\R^N}\frac{F(u_n(x))f(u_n(y))u_n(y)}{|x-y|^\alpha}dxdy.
%\end{split}
%\end{equation}
by \eqref{add1}, there exists  $\delta_1>0$ such that $\displaystyle\liminf_{n\rightarrow \infty}\|w_n\|_\lambda\geq\delta_1$. Let $c^\ast=\delta_1^2/ 4>0$ which is independent of $\lambda$, hence by \eqref{eqs2.7} we have
\begin{equation*}
    c=\lim_{n\to+\infty}J_\lambda(w_n)\geq \lim_{n\to +\infty}\frac{1}{4}\|w_n\|_\lambda^2\geq c^\ast.
\end{equation*}
\end{proof}

\begin{lemma}\label{lemma2.6}
Let $\{w_n\}$ be a $(PS)_c$ sequence for $J_{\lambda}$ with $\lambda\geq \lambda^{*}>0$ and $c>0$. Then there exists a constant $\delta_{2}>0$ independent of $\lambda$, such that
\begin{equation*}
\liminf_{n\rightarrow +\infty}\Big\{\int_{\R^N}|f(w_n(x,0))w_n(x,0)|^{\frac{2N}{N+\alpha}}dx\Big\}^{\frac{N+\alpha}{N}}\geq \delta_2 c.
\end{equation*}
\end{lemma}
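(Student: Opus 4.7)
The plan is to extract a bound on $c$ from the Palais--Smale conditions using the standard trick of combining $J_\lambda(w_n)$ and $\frac{1}{2}\langle J'_\lambda(w_n),w_n\rangle$ to cancel the (positive) norm term, and then estimate the resulting double integral by the Hardy--Littlewood--Sobolev inequality in terms of $\|f(w_n(\cdot,0))w_n(\cdot,0)\|_{2N/(N+\alpha)}$.

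First, since $\{w_n\}$ is bounded in $E_\lambda$ by Lemma~\ref{lemma2.5} and $J'_\lambda(w_n)\to0$ in $E_\lambda^*$, I would write
\begin{equation*}
c+o_n(1)=J_\lambda(w_n)-\tfrac12\langle J'_\lambda(w_n),w_n\rangle
=\tfrac12\int_{\R^N}\bigl(I_\alpha\ast F(w_n(\cdot,0))\bigr)\bigl[f(w_n(\cdot,0))w_n(\cdot,0)-F(w_n(\cdot,0))\bigr]dx,
\end{equation*}
because the $\tfrac12$-weighting kills the $\|w_n\|_\lambda^2$ contribution exactly.

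Next I invoke Remark~\ref{re}: from $(f_1)$ and $(f_3)$ one has $0\le F(t)\le \tfrac12 f(t)t$ and $f(t)t\ge0$. Hence pointwise
\[
0\le F(w_n(\cdot,0))\le \tfrac12 f(w_n(\cdot,0))w_n(\cdot,0),\qquad 0\le f(w_n(\cdot,0))w_n(\cdot,0)-F(w_n(\cdot,0))\le f(w_n(\cdot,0))w_n(\cdot,0),
\]
which, inserted above, gives
\begin{equation*}
c+o_n(1)\le \tfrac14\int_{\R^N}\bigl(I_\alpha\ast f(w_n(\cdot,0))w_n(\cdot,0)\bigr)f(w_n(\cdot,0))w_n(\cdot,0)\,dx.
\end{equation*}

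Now I apply the Hardy--Littlewood--Sobolev inequality (Lemma~\ref{HLS}) with the dual exponents $p=r=\tfrac{2N}{N+\alpha}$, which satisfy $\tfrac1p+\tfrac1r=1+\tfrac{\alpha}{N}$, to obtain a constant $C=C(N,\alpha)>0$ (independent of $\lambda$) with
\begin{equation*}
\int_{\R^N}\bigl(I_\alpha\ast f(w_n(\cdot,0))w_n(\cdot,0)\bigr)f(w_n(\cdot,0))w_n(\cdot,0)\,dx\le C\Bigl(\int_{\R^N}|f(w_n(\cdot,0))w_n(\cdot,0)|^{\frac{2N}{N+\alpha}}dx\Bigr)^{\frac{N+\alpha}{N}}.
\end{equation*}
Combining the last two displays and taking the lower limit as $n\to\infty$ yields the conclusion with $\delta_2:=4/C>0$, which depends only on $N$ and $\alpha$. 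The one subtle point to check is that the pointwise bound $F\le\tfrac12 f(t)t$ is indeed available without $(f_1)$--$(f_3)$ demanding more than we have, but Remark~\ref{re} already records this; beyond that, everything is a direct combination of the Palais--Smale identity, the sign information on $F$, and HLS, with no compactness needed.
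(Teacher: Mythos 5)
Your proof is correct and follows essentially the same route as the paper: compute $J_\lambda(w_n)-\tfrac12\langle J'_\lambda(w_n),w_n\rangle\to c$ to cancel the norm term, use $0\le 2F(t)\le f(t)t$ from Remark \ref{re} together with the positivity of the Riesz kernel to bound the remaining double integral by $\tfrac14\int(I_\alpha\ast f(w_n)w_n)f(w_n)w_n\,dx$, and then apply Hardy--Littlewood--Sobolev with exponents $\tfrac{2N}{N+\alpha}$ to produce a $\lambda$-independent constant. No gaps; the only cosmetic difference is that the paper absorbs the factor $\tfrac14$ into its constant $C_4$ and sets $\delta_2=1/C_4$.
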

\begin{proof}
Before proving the lemma, we  first point out the fact that  $f(t)t\geq 2F(t)\geq 0$  for all $t \in \R$.  Since  $\{w_n\}$ is a $(PS)_c$ sequence for $J_{\lambda}$, then by Hardy-Littlewood-Sobolev inequality, we get
\begin{equation}
\begin{array}{ll}
\displaystyle c&=\displaystyle\lim_{n\rightarrow +\infty}\left(J_{\lambda}(w_n)-\frac{1}{2}\langle J'_{\lambda}(w_n), w_n\rangle\right) \\
&=\displaystyle\frac{1}{2}\lim_{n\rightarrow +\infty}\int_{\R^N}\Big(I_{\alpha}\ast F(w_n(x,0))\Big)\Big(f\big(w_n(x,0)\big)w_n(x,0)-F\big(w_n(x,0)\big)\Big)dx \\
&\leq\displaystyle \frac{1}{4}\lim_{n\rightarrow+\infty}\int_{\R^N}\Big[I_{\alpha}\ast \Big(f\big(w_n(x,0)\big)w_n(x,0)\Big)\Big] f\big(w_n(x,0)\big)w_n(x,0)dx \\
%&\leq\displaystyle \lim_{n\rightarrow +\infty}\int_{\R^N}(\frac{1}{|x|^{\alpha}}\ast f(u_n)u_n)f(u_n)u_n \\
&\leq\displaystyle C_4 \liminf_{n\rightarrow+\infty}\Big\{\int_{\R^N}|f(w_n(x,0))w_n(x,0)|^{\frac{2N}{N+\alpha}}dx\Big\}^{\frac{N+\alpha}{N}}.
\end{array}
\end{equation}
Setting $\displaystyle\delta_2=1/{C_4}$, we then get $\displaystyle\liminf_{n\rightarrow +\infty}\Big\{\int_{\R^N}|f(w_n(x,0))w_n(x,0)|^{\frac{2N}{N+\alpha}}dx\Big\}^{\frac{N+\alpha}{N}}\geq c/{C_4}=\delta_2 c$.
\end{proof}

\begin{lemma}\label{lemma2.7}
Let $\bar{C}>0$  be fixed and independent of $\lambda$, $\{w_n\}$ be a $(PS)_c$ sequence for $J_{\lambda}$ with $c\in [0,\bar{C}]$. Given $\varepsilon>0$, there exist $\Lambda_{\varepsilon}=\Lambda(\varepsilon)$ and $R_{\varepsilon}=R(\varepsilon)$ such that
\begin{equation}
\limsup_{n\rightarrow +\infty}\Big\{\int_{\R^N\setminus B_{R_\varepsilon}(0)}|f(w_n(x,0))w_n(x,0)|^{\frac{2N}{N+\alpha}}dx\Big\}^{\frac{N+\alpha}{N}}\leq \varepsilon, \ \ \ \ \forall ~\lambda\geq \Lambda_{\varepsilon}.
\end{equation}
\end{lemma}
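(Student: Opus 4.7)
The plan is to dominate $|f(t)t|^{2N/(N+\alpha)}$ by a sum of two power-type terms in $|t|$ and then split the tail domain $\R^{N}\setminus B_R(0)$ according to whether $V$ is small or large. This will exploit both pillars of the potential-well hypothesis: the finite-measure condition $(V_2)$ controls the region $\{V\le M\}$, while the $\lambda V$ term in the norm forces $L^{2}$-smallness on $\{V>M\}$ for large $\lambda$.

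First, I would invoke Lemma \ref{lemma2.5} to obtain a uniform bound $\limsup_n\|w_n\|_\lambda\le K$ with $K=K(\bar C)$ independent of $\lambda\ge\lambda^{*}$; the continuous embedding $X^{s}\hookrightarrow L^{2_{s}^{*}}(\R^{N})$ then yields $|w_n(\cdot,0)|_{2_{s}^{*}}\le CK$ uniformly in $n$ and $\lambda$. Using $(f_1)$--$(f_2)$, $|f(t)t|\le |t|^{2}+C|t|^{p+1}$, so
\[
|f(t)t|^{\frac{2N}{N+\alpha}}\le C_1|t|^{q_1}+C_2|t|^{q_2},\qquad q_1:=\tfrac{4N}{N+\alpha},\ \ q_2:=\tfrac{2(p+1)N}{N+\alpha}.
\]
The restriction $\alpha>(N-4s)^{+}$ together with $\alpha<N$ places $q_1\in[2,2_{s}^{*})$, and $1<p<(\alpha+2s)/(N-2s)$ places $q_2\in[2,2_{s}^{*})$; both exponents therefore lie strictly inside the $L^{2}$--$L^{2_{s}^{*}}$ interpolation window.

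Setting $D:=\{V\le M\}$, for each $q\in\{q_1,q_2\}$ I split the integral $\int_{\R^{N}\setminus B_R}|w_n|^{q}\,dx$ over $D\cap B_R^{c}$ and $D^{c}\cap B_R^{c}$. On $D^{c}$, the inequality $V>M$ gives
\[
|w_n|_{L^{2}(D^{c})}^{2}\le \frac{1}{\lambda M}\int_{\R^{N}}\lambda V|w_n|^{2}\,dx\le \frac{K^{2}}{\lambda M},
\]
and standard interpolation via $1/q=(1-\theta_q)/2+\theta_q/2_{s}^{*}$ yields $|w_n|_{L^{q}(D^{c})}^{q}\le C\lambda^{-q(1-\theta_q)/2}\to 0$ as $\lambda\to\infty$. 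On $D\cap B_R^{c}$, hypothesis $(V_2)$ gives $\mu(D)<\infty$, hence $\mu(D\cap B_R^{c})\to 0$ as $R\to\infty$; H\"older's inequality (using $q<2_{s}^{*}$) then provides
\[
\int_{D\cap B_R^{c}}|w_n|^{q}\,dx\le \mu(D\cap B_R^{c})^{1-q/2_{s}^{*}}\,|w_n|_{2_{s}^{*}}^{q}\longrightarrow 0\quad(R\to\infty),
\]
uniformly in $n$ and $\lambda$.

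Given $\varepsilon>0$, set $\varepsilon':=\varepsilon^{N/(N+\alpha)}/(2(C_1+C_2))$. I would first choose $R_\varepsilon$ so large that the $D\cap B_R^{c}$ contribution in the $\limsup$ is at most $\varepsilon'$, and then $\Lambda_\varepsilon$ so large that the $D^{c}\cap B_R^{c}$ contribution is at most $\varepsilon'$ whenever $\lambda\ge\Lambda_\varepsilon$; summing and raising to the power $(N+\alpha)/N$ gives the claimed bound. The mildly delicate point is verifying $q_1,q_2\in[2,2_{s}^{*})$ strictly: this is precisely where the lower bound $\alpha>(N-4s)^{+}$ is used for the quadratic term and where $(f_2)$ is used for the superlinear term, so without these two exponent conditions the interpolation argument on $D^{c}$ and the H\"older estimate on $D\cap B_R^{c}$ would both break down.
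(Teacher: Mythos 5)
Your proposal is correct and follows essentially the same route as the paper: the same splitting of the tail $\R^N\setminus B_R(0)$ into the sets where $V$ exceeds $M$ (controlled by the $\lambda V$ term in $\|\cdot\|_\lambda$) and where $V\le M$ (controlled via $(V_2)$ and H\"older against the $L^{2_s^*}$ bound), combined with the growth bound $|f(t)t|\le \varepsilon t^2+C_\varepsilon|t|^{p+1}$. The only cosmetic difference is that the paper first establishes $L^2$-smallness on the whole tail and then interpolates up to the exponents $\tfrac{4N}{N+\alpha}$ and $\tfrac{2N(p+1)}{N+\alpha}$, whereas you interpolate directly on each piece; your explicit verification that these exponents lie in $[2,2_s^*)$ is exactly where the paper implicitly uses $\alpha>(N-4s)^+$ and $(f_2)$.
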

\begin{proof}
For $R>0$, we define
\begin{equation*}
A(R)=\{x\in \R^N \mid |x|>R \ \ \text{and}\ \ V(x)\geq M\},
\end{equation*}
and
\begin{equation*}
B(R)=\{x\in \R^N\mid |x|>R \ \ \text{and}\ \ V(x)< M\}.
\end{equation*}
Hence,  with a direct calculation, we get
\begin{equation}\label{eqs4.22}
\begin{array}{ll}
\displaystyle\int_{A(R)}|w_{n}(x,0)|^{2}dx &\displaystyle\leq \frac{1}{\lambda M}\int_{\R^N}\lambda V(x)|w_{n}(x,0)|^{2}dx \\
&\leq  \displaystyle\frac{1}{\lambda M} \|w_n\|_{\lambda}^{2}   \\
& \leq \displaystyle \frac{1}{\lambda M} \big(4c+o_{n}(1)\|w_n\|_{\lambda}\big) \\
& \leq \displaystyle \frac{1}{\lambda M}\big(4\bar{C}+o_{n}(1)\|w_n\|_{\lambda}\big),
\end{array}
\end{equation}
where in the third inequality, we have used \eqref{eqs2.7}.

Since $\bar{C}$ is independent of $\lambda$, then by \eqref{eqs4.22},  there exists some $\Lambda_{\varepsilon}>0$, such that
\begin{equation}\label{eqs4.23}
\limsup_{n\rightarrow +\infty}\int_{A(R)}|w_{n}(x,0)|^{2}dx <\frac{\varepsilon}{4}, \ \ \forall ~\lambda \geq \Lambda_{\varepsilon}.
\end{equation}
By using the H\"{o}lder inequality and continuous embeddings $E_{\lambda}\hookrightarrow E\hookrightarrow L^{q}(\R^N)$ with $q\in[2,2_{s}^{*}]$, we have
\begin{equation}\label{eqs4.24}
\int_{B(R)}|w_{n}(x,0)|^{2}dx \leq C \|w_n\|_{\lambda}^{2} \cdot \mu (B(R))^{\frac{2s}{N}}\leq 4\bar{C}C \cdot \mu(B(R))^{\frac{2s}{N}}+o_{n}(1).
\end{equation}
Furthermore by $(V_2)$, we know that $\mu(B(R))\rightarrow 0$ as $R\rightarrow +\infty$. Thus we choose $R_\varepsilon:=R(\varepsilon)$ large enough such that
\begin{equation}\label{eqs4.25}
\limsup_{n\rightarrow+\infty}\int_{B(R_\varepsilon)}|w_{n}(x,0)|^{2}dx< \frac{\varepsilon}{4}.
\end{equation}
Setting $\lambda\geq \Lambda_{\varepsilon}$, $R =R_{\varepsilon}$ and combining \eqref{eqs4.23} with \eqref{eqs4.25}, we  obtain
\begin{equation}\label{eqs4.26}
\limsup_{n\rightarrow+\infty}\int_{\R^N\setminus B_{R_\varepsilon}(0)}|w_{n}(x,0)|^{2}dx <\frac{\varepsilon}{4}+\frac{\varepsilon}{4}=\frac{\varepsilon}{2}.
\end{equation}
Since $\{w_n\}$ is a $(PS)_c$ sequence, hence by Lemma \ref{lemma2.5} we know that $\{w_n\}$ must be bounded in $E_\lambda$. By interpolation inequality and \eqref{eqs4.26} we have
\begin{equation*}
\displaystyle\limsup_{n\rightarrow +\infty}\int_{\R^N\setminus B_{R_\varepsilon}(0)}|w_n(x,0)|^{\frac{4N}{N+\alpha}}dx<\frac{\varepsilon}{2}, \ \ \ \ \forall\lambda>\Lambda_{\varepsilon}
\end{equation*}
and
\begin{equation*}
\displaystyle\limsup_{n\rightarrow +\infty}\int_{\R^N\setminus B_{R_\varepsilon}(0)}|w_n(x,0)|^{\frac{2N(p+1)}{N+\alpha}}dx<\frac{\varepsilon}{2}, \ \ \ \ \forall \lambda>\Lambda_\varepsilon.
\end{equation*}
Thus we get
\begin{equation*}
\begin{array}{ll}
& \ \ \ \ \displaystyle\limsup_{n\rightarrow +\infty}\Big\{\int_{\R^N\setminus B_{R_\varepsilon}(0)}|f(w_n(x,0))w_n(x,0)|^{\frac{2N}{N+\alpha}}dx\Big\}^{\frac{N+\alpha}{N}}  \\
%&\leq \displaystyle\limsup_{n\rightarrow +\infty}\Big\{\int_{\R^3\setminus B_{R}(0)}|\varepsilon u_n^{2}+C_{\varepsilon}u_{n}^{p+1}|^{\frac{6}{6-\mu}}dx\Big\}^{\frac{6-\mu}{3}} \\
&\leq\displaystyle C\left(\limsup_{n\rightarrow +\infty}\Big\{\int_{\R^N\setminus B_{R_\varepsilon}(0)}| w_n(x,0)|^{\frac{4N}{N+\alpha}}dx\Big\}^{\frac{N+\alpha}{N}}+\limsup_{n\rightarrow +\infty}\Big\{\int_{\R^3\setminus B_{R_\varepsilon}(0)}| w_n(x,0)|^{\frac{2N(p+1)}{N+\alpha}}dx\Big\}^{\frac{N+\alpha}{N}}\right) \\
&\leq \varepsilon,
\end{array}
\end{equation*}
which follows by the fact $f(t)t\leq \varepsilon t^2+C_\varepsilon t^{p+1}$.
\end{proof}
The following lemma is a Br\'ezis-Lieb type Lemma for Choquard type equation.
\begin{lemma}\label{lemma2.8}
Let  $\{w_n\}$ be a $(PS)_{c}$ sequence for $J_{\lambda}$. If $w_n\rightharpoonup w$ in $E_{\lambda}$, then
\begin{equation}\label{eqs2.17}
\lim_{n\rightarrow \infty}\big(J_{\lambda}(w_n)-J_{\lambda}(v_n)\big)=J_{\lambda}(w),
\end{equation}
\begin{equation}\label{eqs2.18}
\lim_{n\rightarrow \infty}\big(J'_{\lambda}(w_n)-J'_{\lambda}(v_n)\big)=J'_{\lambda}(w).
\end{equation}
where $v_n=w_n-w$. Furthermore, $w$ is a weak solution to equation \eqref{eqs1.6} and $\{v_n\}$ is a $(PS)_{c-J_{\lambda}(w)}$ sequence.
\end{lemma}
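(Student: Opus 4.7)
By Lemma \ref{lemma2.5}, the sequence $\{w_n\}$ is bounded in $E_\lambda$, so along a subsequence $w_n \rightharpoonup w$ in $E_\lambda$, the traces $w_n(\cdot,0)\to w(\cdot,0)$ in $L^q_{\mathrm{loc}}(\R^N)$ for every $q\in[2,2^*_s)$, and pointwise almost everywhere. Writing $v_n=w_n-w$, the standard Hilbert-space Brezis--Lieb identity applied to the inner product of $E_\lambda$ yields $\|w_n\|_\lambda^2=\|v_n\|_\lambda^2+\|w\|_\lambda^2+o_n(1)$, together with $\langle w_n,\varphi\rangle_\lambda=\langle v_n,\varphi\rangle_\lambda+\langle w,\varphi\rangle_\lambda$ for every $\varphi\in E_\lambda$. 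Everything then reduces to handling the two Choquard pieces.

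For \eqref{eqs2.17}, I would establish the nonlocal Brezis--Lieb type identity
\[
\int_{\R^N}\!(I_\alpha\ast F(w_n))F(w_n)\,dx-\int_{\R^N}\!(I_\alpha\ast F(v_n))F(v_n)\,dx\to\int_{\R^N}\!(I_\alpha\ast F(w))F(w)\,dx.
\]
Using $(f_1)$ and $(f_2)$ one has $|F(t)|\le\varepsilon t^2+C_\varepsilon|t|^{p+1}$; the range $(N-4s)^+<\alpha<N$ together with $1<p<(\alpha+2s)/(N-2s)$ is exactly what places the two exponents $4N/(N+\alpha)$ and $2N(p+1)/(N+\alpha)$ into $[2,2^*_s]$, so by the embedding $E\hookrightarrow L^q(\R^N)$ the family $\{F(w_n(\cdot,0))\}$ is bounded in $L^{2N/(N+\alpha)}(\R^N)$. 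Combining a.e.\ convergence with this equi-integrability, the classical Brezis--Lieb lemma applied to the superposition $F$ gives
\[
F(w_n(\cdot,0))-F(v_n(\cdot,0))-F(w(\cdot,0))\to0\quad\text{in }L^{\frac{2N}{N+\alpha}}(\R^N),
\]
and in particular $F(v_n(\cdot,0))\rightharpoonup0$ weakly in the same space. Expanding the bilinear form, the Hardy--Littlewood--Sobolev inequality (Lemma \ref{HLS}) controls every cross term, and each one of the form $\int(I_\alpha\ast F(v_n))F(w)\,dx$ vanishes by duality, since $I_\alpha\ast F(w)\in L^{2N/(N-\alpha)}(\R^N)$ and $F(v_n)\rightharpoonup 0$ in $L^{2N/(N+\alpha)}(\R^N)$.

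An entirely parallel argument, with $f(w_n)$ in place of one copy of $F(w_n)$ and tested against $\varphi\in E_\lambda$, yields
\[
\int(I_\alpha\ast F(w_n))f(w_n)\varphi\,dx-\int(I_\alpha\ast F(v_n))f(v_n)\varphi\,dx\to\int(I_\alpha\ast F(w))f(w)\varphi\,dx
\]
uniformly for $\varphi$ in the unit ball of $E_\lambda$; this together with the linear Brezis--Lieb decomposition gives \eqref{eqs2.18}. From \eqref{eqs2.18} and $J'_\lambda(w_n)\to 0$ we deduce $J'_\lambda(w)=0$, i.e.\ $w$ weakly solves \eqref{eqs1.6}, and combining with \eqref{eqs2.17} and $J_\lambda(w_n)\to c$ we get $J_\lambda(v_n)\to c-J_\lambda(w)$ and $J'_\lambda(v_n)\to 0$, so $\{v_n\}$ is $(PS)_{c-J_\lambda(w)}$.

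The main obstacle is the nonlocal Brezis--Lieb step, because the Choquard term couples two copies of $F(w_n)$ through the Riesz kernel and there is no direct pointwise cancellation available. What makes it work is precisely the interplay between Hardy--Littlewood--Sobolev, pointwise a.e.\ convergence of $F(w_n)$, and the exponent constraints $(N-4s)^+<\alpha<N$ and $1<p<(\alpha+2s)/(N-2s)$; verifying the $L^{2N/(N+\alpha)}$ equi-integrability needed to invoke Vitali on $F(w_n)-F(v_n)-F(w)$ is the technical heart of the proof, while the passage to the weak-solution property of $w$ and the identification of the new $(PS)$ level are then essentially formal consequences.
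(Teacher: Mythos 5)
Your proposal is correct and follows essentially the same route as the paper: decompose the Choquard term, control the pieces with Hardy--Littlewood--Sobolev, reduce everything to the strong convergence $F(w_n(\cdot,0))-F(v_n(\cdot,0))-F(w(\cdot,0))\to 0$ in $L^{2N/(N+\alpha)}(\R^N)$, and kill the cross term $\int(I_\alpha\ast F(w))F(v_n)\,dx$ by the duality $L^{2N/(N-\alpha)}$--$L^{2N/(N+\alpha)}$ against $F(v_n)\rightharpoonup 0$. The only difference is that you invoke the nonlinear Brezis--Lieb/Vitali lemma for the superposition $F$, where the paper proves this sub-step by hand, splitting $\R^N$ into $B_{R_\varepsilon}$ (local compactness) and its complement (mean value theorem plus H\"older with auxiliary exponents $p^*,q^*$ made admissible by $(N-4s)^+<\alpha<N$ and $p<(\alpha+2s)/(N-2s)$) --- the same exponent bookkeeping you carry out. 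One small logical slip: you cannot deduce $J'_\lambda(w)=0$ from \eqref{eqs2.18} and $J'_\lambda(w_n)\to 0$ alone, since that only yields $J'_\lambda(v_n)\to -J'_\lambda(w)$; the weak-solution property should be obtained independently by passing to the limit in $\langle J'_\lambda(w_n),\varphi\rangle\to 0$ for fixed $\varphi$ (using local compactness of the trace embedding), after which $J'_\lambda(v_n)\to 0$ follows from \eqref{eqs2.18}.
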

\begin{proof}
We only give the proof of \eqref{eqs2.17}, with a similar argument, \eqref{eqs2.18} can  also be proved.
In order to complete the proof, it is sufficient to prove a Br\'ezis-Lieb type lemma for the nonlocal term, more precisely,
\begin{equation}
\begin{split}
\lim_{n\rightarrow +\infty}&\Big(\int_{\R^N}\Big(I_{\alpha}\ast F(w_n(x,0))\Big)F(w_n(x,0))dx-\int_{\R^N}\Big(I_{\alpha}\ast F(v_n(x,0))\Big)F(v_n(x,0))dx\Big)\\
=&\int_{\R^N}\Big(I_{\alpha}\ast F(w(x,0))\Big)F(w(x,0))dx.
\end{split}
\end{equation}
Define
\begin{align*}
 \mathcal{G}=\displaystyle\int_{\R^N}\Big(I_{\alpha}\ast F(w_n(x,0))\Big)F(w_n(x,0))dx &-\int_{\R^N}\Big(I_{\alpha}\ast F(v_n(x,0))\Big)F(v_n(x,0))dx \\
 & -\int_{\R^N}\Big(I_{\alpha}\ast F(w(x,0))\Big)F(w(x,0))dx.
\end{align*}
With a direct computation, we obtain that
\begin{align}
\mathcal{G}&= \displaystyle\int_{\R^N}\Big(I_{\alpha} \ast F(w_n(x,0))\Big) \Big(F(w_{n}(x,0))-F(v_{n}(x,0))-F(w(x,0))\Big)dx  \label{eqs2.19}\\
&\ \ \ \ \ \displaystyle+ \int_{\R^N}\Big(I_{\alpha} \ast F(v_n(x,0))\Big)\Big (F(w_{n}(x,0))-F(v_{n}(x,0))-F(w(x,0))\Big) dx \label{eqs2.20}\\
&\ \ \ \ \ \displaystyle+ \int_{\R^N}\Big(I_{\alpha} \ast F(w(x,0))\Big)\Big (F(w_{n}(x,0))-F(v_{n}(x,0))-F(w(x,0))\Big) dx \label{eqs2.21}  \\
& \ \ \ \ \  \displaystyle +2\int_{\R^N}\Big(I_{\alpha}\ast F(w(x,0))\Big)F(v_n(x,0))dx  \label{eqs2.22}.
\end{align}
Applying the Hardy-Littlewood-Sobolev inequality to the nonlocal terms in (\ref{eqs2.19})--(\ref{eqs2.21}), one has
\begin{align*}
 \mathcal{G } &\leq C | F(w_n(x,0))|_{\frac{2N}{N+\alpha}} |F(w_{n}(x,0))-F(v_{n}(x,0))-F(w(x,0))|_{\frac{2N}{N+\alpha}}\\
    &\ \ \ \ + C | F(v_n(x,0))|_{\frac{2N}{N+\alpha}} |F(w_{n}(x,0))-F(v_{n}(x,0))-F(w(x,0))|_{\frac{2N}{N+\alpha}}\\
    &\ \ \ \ + C | F(w(x,0))|_{\frac{2N}{N+\alpha}}|F(w_{n}(x,0))-F(v_{n}(x,0))-F(w(x,0))|_{\frac{2N}{N+\alpha}}\\
     & \ \ \ \  \displaystyle +2\int_{\R^N}\Big(I_{\alpha}\ast F(w(x,0))\Big)F(v_n(x,0))dx,
\end{align*}
for some $C>0$.

Without loss of generality, we assume $w_n\rightharpoonup w$ in $E_\lambda$ up to a subsequence, then $\{w_n\}$ is bounded in $E_{\lambda}$.
Hence under assumptions $(f_1)$ and $(f_2)$, we get
\begin{equation*}
    \begin{split}
      & | F(w_n(x,0))|_{\frac{2N}{N+\alpha}}+| F(v_n(x,0))|_{\frac{2N}{N+\alpha}}+| F(w(x,0))|_{\frac{2N}{N+\alpha}}\\
\leq & \varepsilon\left(|w_n(x,0)|^2_{\frac{4N}{N+\alpha}}+|w(x,0)|^2_{\frac{4N}{N+\alpha}}+|v_n(x,0)|^2_{\frac{4N}{N+\alpha}}\right)\\
& +C_\varepsilon \left(|w_n(x,0)|^{p+1}_{\frac{2N(p+1)}{N+\alpha}}+|w(x,0)|^{p+1}_{\frac{2N(p+1)}{N+\alpha}}+|v_n(x,0)|^{p+1}_{\frac{2N(p+1)}{N+\alpha}}\right)\\
\leq &C.
    \end{split}
\end{equation*}

We claim that $F(w_{n}(x,0))-F(v_{n}(x,0))-F(w(x,0))\to 0$ strongly in $L^{\frac{2N}{N+\alpha}}(\R^N)$ as $n\to +\infty$. In fact, as $w_n\rightharpoonup w$  in $E_{\lambda}$, it follows that  $w_n(x,0)\to w(x,0)$ strongly  in $L_{loc}^{q}(\R^N)$ for any $q\in \left[2,2_{s}^{*}\right)$ and $w_n(x,0)\to w(x,0)$ a.e. in $\R^N$.
Thus we have $F(w_n(x,0))\rightarrow F(w(x,0))$ strongly in $L^{\frac{2N}{N+\alpha}}(B_{R_\varepsilon}(0))$  and
$F(v_n(x,0))\rightarrow 0$ strongly in $L^{\frac{2N}{N+\alpha}}(B_{R_\varepsilon}(0))$, moreover
\begin{equation}\label{eqs4.13}
\begin{array}{ll}
&\ \ \ \ \displaystyle\Big\{\int_{B_{R_\varepsilon}(0)}|F(w_n(x,0))-F(v_{n}(x,0))-F(w(x,0))|^{\frac{2N}{N+\alpha}}dx\Big\}^{\frac{N+\alpha}{2N}} \\
&\leq \displaystyle\Big\{\int_{B_{R_\varepsilon}(0)}|F(w_n(x,0))-F(w(x,0))|^{\frac{2N}{N+\alpha}}dx\Big\}^{\frac{N+\alpha}{2N}}
+\displaystyle\Big\{\int_{B_{R_\varepsilon}(0)}|F(v_n(x,0))|^{\frac{2N}{N+\alpha}}dx\Big\}^{\frac{N+\alpha}{2N}} \\
&=o_{n}(1).
\end{array}
\end{equation}
%\begin{equation*}
%    \int_E |u_n|^{\frac{4N}{2N-\alpha}} dx\to 0,\quad \int_E |u_n|^{\frac{2N(p+1)}{2N-\alpha}} dx\to 0, \; \text{uniformly in $n$,}
%\end{equation*}
%as the Lebesgue measure of $E$ converges to zero.
%By the Vitali Lemma, for some large $R>0$, we have
%\begin{equation}\label{eqs4.13}
%\int_{B_{R}(0)}|F(u_n(x))-F(v_{n}(x))-F(u(x))|^{\frac{2N}{2N-\alpha}}dx\to 0,\;\text{as}\;n\to +\infty
%\end{equation}
For some $ \bar\theta\in (0,1)$, we have
\begin{align}\label{eqs2.6}
&\displaystyle\ \ \ \ \int_{\R^N \setminus B_{R_\varepsilon}(0)}|F(w_n(x,0))-F(v_{n}(x,0))-F(w(x,0))|^{\frac{2N}{N+\alpha}}dx\nonumber \\
&\leq \displaystyle\int_{\R^N \setminus B_{R_\varepsilon}(0)}|F(w_n(x,0))-F(v_n(x,0))|^{\frac{2N}{N+\alpha}}dx
+\displaystyle\int_{\R^N \setminus B_{R_\varepsilon}(0)}|F(w(x,0))|^{\frac{2N}{N+\alpha}}dx\nonumber\\
&= \displaystyle\int_{\R^N \setminus B_{R_\varepsilon}(0)}|f\big(v_n(x,0)+\bar\theta w(x,0)\big)w(x,0)|^{\frac{2N}{N+\alpha}}dx+o_{R_\varepsilon}(1)\quad \\
&\leq\displaystyle C\int_{\R^N \setminus B_{R_\varepsilon}(0)} |v_n(x,0)|^{\frac{2N}{N+\alpha}}|w(x,0)|^\frac{2N}{N+\alpha}+|w(x,0)|^{\frac{4N}{N+\alpha}}dx   \nonumber\\
& \ \ \ \
+C\int_{\R^N\setminus B_{R_\varepsilon}(0)}|v_n(x,0)|^{\frac{2Np}{N+\alpha}}|w(x,0)|^{\frac{2N}{N+\alpha}}+|w(x,0)|^{\frac{2N(p+1)}{N+\alpha}}dx + o_{R_\varepsilon}(1). \nonumber
\end{align}
 %By $(f_1)$ and $(f_2)$, we have
%$|f(v_n+\hat\theta u)|_{\frac{2N}{N+2-\alpha}}\leq C\left(|u_n|_{\frac{2N}{N+2-\alpha}}+|u_n|^p_{\frac{2Np}{N+2-\alpha}}\right)\leq \tilde C $,
%for $0<\frac{2N}{N+2-\alpha}<\frac{2Np}{N+2-\alpha}<\frac{2N}{N-2}$.
Since $N> 2s$ and $\alpha\in \big((N-4s)^{+},N\big)$, there exist some $p^{*}$, $q^{*}$ with $1<\frac{N+\alpha}{N}\leq p^* $ and $1<\frac{N+\alpha}{2s+\alpha}\leq q^*$ such that
\begin{align}\label{2.27}
&\ \ \ \ \displaystyle\int_{\R^N \setminus B_{R_\varepsilon}(0)} |v_n(x,0)|^\frac{2N}{N+\alpha}|w(x,0)|^\frac{2N}{N+\alpha}dx \nonumber \\
&\leq \left\{\int_{\R^N} |v_n(x,0)|^\frac{2Np^*}{N+\alpha}dx\right\}^{\frac{1}{p^*}}\left\{\int_{\R^N\setminus B_{R_\varepsilon}(0)}|w(x,0)|^{\frac{2Np^{*}}{(N+\alpha) (p^{*}-1)}}dx\right\}^{1-\frac{1}{p^*}}\rightarrow 0,
\end{align}
and
\begin{align}\label{2.28}
&\ \ \ \  \displaystyle\int_{\R^N \setminus B_{R_\varepsilon}(0)} |v_n(x,0)|^\frac{2Np}{N+\alpha}|w(x,0)|^{\frac{2N}{N+\alpha}}dx  \nonumber \\
&\leq \left\{\int_{\R^N} |v_n(x,0)|^\frac{2Npq^*}{N+\alpha}dx\right\}^{\frac{1}{q^*}}\left\{\int_{\R^N\setminus B_{R_\varepsilon}(0)}|w(x,0)|^{\frac{2Nq^{*}}{(N+\alpha)(q^{*}-1)}}dx\right\}^{1-\frac{1}{q^*}} \rightarrow 0.
\end{align}
Thus substituting (\ref{2.27}) and (\ref{2.28}) into \eqref{eqs2.6} and taking the limit $n\to +\infty$ firstly, then $R_\varepsilon\to +\infty$ subsequently, we  obtain
\begin{equation}\label{eqsaddadd}
 \int_{\R^N \setminus B_{R_\varepsilon}(0)}|F(w_n(x,0))-F(v_{n}(x,0))-F(w(x,0))|^{\frac{2N}{N+\alpha}}dx\to 0.
\end{equation}
It follows from \eqref{eqs4.13} and \eqref{eqsaddadd} that
\begin{equation}
\int_{\R^N}|F(w_n(x,0))-F(w(x,0))-F(v_n(x,0))|^{\frac{2N}{N+\alpha}}dx \rightarrow 0  ~~\text{as}~~ n\to \infty.
\end{equation}
%\begin{equation}\label{eqs4.14}
%\begin{array}{ll}
%&\displaystyle\ \ \ \ \Big\{\int_{\R^N \setminus B_{R}(0)}|F(u_n(x))-F(v_{n}(x))-F(u(x))|^{\frac{2N}{2N-\alpha}}dx\Big\}^{\frac{2N-\alpha}{2N}} \\
%&\leq \displaystyle\Big\{\int_{\R^N \setminus B_{R}(0)}|F(u_n(x))-F(v_n(x))|^{\frac{2N}{2N-\alpha}}dx\Big\}^{\frac{2N-\alpha}{2N}}
%+\displaystyle\Big\{\int_{\R^N \setminus B_{R}(0)}|F(u(x))|^{\frac{2N}{2N-\alpha}}dx\Big\}^{\frac{2N-\alpha}{2N}} \\
%&\leq \displaystyle\Big\{\int_{\R^N \setminus B_{R}(0)}|f(\sigma)(u_n-v_n)|^{\frac{2N}{2N-\alpha}}dx\Big\}^{\frac{2N-\alpha}{2N}}+o_{n}(1)\\
%&\leq \displaystyle\Big\{\int_{\R^N \setminus B_{R}(0)}|f(\sigma)(u)|^{\frac{2N}{2N-\alpha}}dx\Big\}^{\frac{2N-\alpha}{2N}}+o_{n}(1) \\
%&\leq \displaystyle\Big\{\int_{\R^N \setminus B_{R}(0)}|f(\sigma)|^{\frac{2Np^{*}}{2N-\alpha}}dx\Big\}^{\frac{2N-\alpha}{2Np^{*}}}
%\displaystyle\Big\{\int_{\R^N \setminus B_{R}(0)}|u|^{\frac{2Np^{*}}{(2N-\alpha)(p^{*}-1)}}dx\Big\}^{\frac{(2N-\alpha)(p^{*}-1)}{2Np^{*}}}  \\
%&=o_{n}(1),
%\end{array}
%\end{equation}
%where $p^{*}$ belongs to $\left[\frac{2N-\alpha}{N+2-\alpha}, \frac{2N-\alpha}{(N-2)p}\right]$.
%
%By
%By the Hardy-Littlewood-Sobolev inequality, \eqref{eqs4.8}--\eqref{eqs4.10} are equal to $o_{n}(1)$.
Before completing the proof, we still need to prove
\begin{equation}\label{new label 1}
   \displaystyle\int_{\R^N}\Big(I_{\alpha}\ast F(w(x,0))\Big)F(v_n(x,0))dx \to 0.
\end{equation}
 %$F(v_n)\rightarrow 0$ in $L^{\frac{2N}{2N-\alpha}}(\R^N)$.
By the facts that $v_{n}\rightharpoonup 0$ in $E_{\lambda}$ and $F(v_n(x,0))$ is bounded in $L^{\frac{2N}{N+\alpha}}(\R^N)$, we then assert
\begin{equation}\label{a}
F(v_n(x,0))\rightharpoonup 0~~ \text{in}~~ L^{\frac{2N}{N+\alpha}}(\R^N).
\end{equation}
As $F(w(x,0))\in L^{\frac{2N}{N+\alpha}}(\R^N)$, thus by the Hardy-Littlewood-Sobolev inequality,
\begin{equation}\label{b}
  I_{\alpha}\ast F(w(x,0))\in L^{\frac{2N}{N-\alpha}}(\R^N).
\end{equation}
By \eqref{a} and \eqref{b}, we then prove \eqref{new label 1} and complete the  proof.
\end{proof}
Now, we prove the following compactness result.
\begin{proposition}\label{pro2.1}
  Suppose that $(V_1)-(V_2)$ and $(f_1)-(f_3)$ hold. Then for any $\widetilde{C}>0$, there exists $\Lambda_0>0$ such that $J_{\lambda}$ satisfies the $(PS)_c$ condition for each $\lambda\geq \Lambda_0$ and $c\leq \widetilde{C}$.
\end{proposition}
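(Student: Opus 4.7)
The plan is a standard concentration--compactness style argument, with the Brezis--Lieb decomposition of Lemma \ref{lemma2.8} reducing matters to showing that the residual sequence vanishes, and the key quantitative work being done by Lemmas \ref{lemma2.5}, \ref{lemma2.6}, \ref{lemma2.7}. Start with a $(PS)_c$ sequence $\{w_n\}\subset E_\lambda$ with $c\le \widetilde C$. By Lemma \ref{lemma2.5} the sequence is bounded uniformly in $\lambda\ge\lambda^\ast$, so up to a subsequence $w_n\rightharpoonup w$ in $E_\lambda$. By Lemma \ref{lemma2.8}, $w$ is a weak solution of \eqref{eqs1.6} and $v_n:=w_n-w$ is a $(PS)_{c'}$ sequence for $J_\lambda$ with $c'=c-J_\lambda(w)$. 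Since $w$ is a critical point of $J_\lambda$, the same computation that produced \eqref{eqs2-6} (using $f(t)t\ge 2F(t)\ge0$) applied to $w$ gives $J_\lambda(w)=J_\lambda(w)-\tfrac14\langle J'_\lambda(w),w\rangle\ge 0$. Thus $c'\in[0,\widetilde C]$.

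The goal is $v_n\to 0$ in $E_\lambda$, which is equivalent to the $(PS)_c$ condition. If $c'=0$, then inequality \eqref{eqs2.7} applied to $\{v_n\}$ immediately yields $\|v_n\|_\lambda\to 0$, and we are done. Suppose instead $c'>0$; then Lemma \ref{lemma2.5} forces $c'\ge c^\ast$ with $c^\ast>0$ independent of $\lambda$. Apply Lemma \ref{lemma2.6} to $\{v_n\}$ to obtain
\begin{equation*}
\liminf_{n\to\infty}\Big\{\int_{\R^N}|f(v_n(x,0))v_n(x,0)|^{\frac{2N}{N+\alpha}}dx\Big\}^{\frac{N+\alpha}{N}}\ \ge\ \delta_2 c'\ \ge\ \delta_2 c^\ast.
\end{equation*}
Now fix $\varepsilon=\delta_2 c^\ast/2$ and use Lemma \ref{lemma2.7} (applied to the $(PS)_{c'}$ sequence $\{v_n\}$, noting $c'\le\widetilde C$) to produce $\Lambda_\varepsilon$ and $R_\varepsilon$ with
\begin{equation*}
\limsup_{n\to\infty}\Big\{\int_{\R^N\setminus B_{R_\varepsilon}(0)}|f(v_n(x,0))v_n(x,0)|^{\frac{2N}{N+\alpha}}dx\Big\}^{\frac{N+\alpha}{N}}\ \le\ \frac{\delta_2 c^\ast}{2},\quad \forall\,\lambda\ge\Lambda_\varepsilon.
\end{equation*}
Consequently, the contribution from $B_{R_\varepsilon}(0)$ must stay bounded below by a positive constant.

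The final step is to show this interior contribution is in fact $o_n(1)$, which will contradict the preceding inequality and hence force $c'=0$. Using $(f_1)$--$(f_2)$ one has $|f(t)t|\le \varepsilon|t|^2+C_\varepsilon |t|^{p+1}$, so
\begin{equation*}
\int_{B_{R_\varepsilon}(0)}|f(v_n(x,0))v_n(x,0)|^{\frac{2N}{N+\alpha}}dx\ \le\ C\Big(\int_{B_{R_\varepsilon}(0)}|v_n(x,0)|^{\frac{4N}{N+\alpha}}dx+\int_{B_{R_\varepsilon}(0)}|v_n(x,0)|^{\frac{2N(p+1)}{N+\alpha}}dx\Big).
\end{equation*}
The restrictions $N>2s$, $\alpha\in((N-4s)^+,N)$, and $1<p<(\alpha+2s)/(N-2s)$ are precisely what is needed to place both exponents $4N/(N+\alpha)$ and $2N(p+1)/(N+\alpha)$ in the interval $[2,2_s^\ast)$, so the compact local embedding $E\hookrightarrow L^q_{loc}(\R^N)$ combined with $v_n(\cdot,0)\to 0$ in $L^q_{loc}$ for every such $q$ gives that both integrals tend to $0$. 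This contradicts the lower bound, so $c'=0$, and choosing $\Lambda_0=\max\{\Lambda_\varepsilon,\lambda^\ast\}$ completes the proof.

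\noindent\textbf{Main obstacle.} The delicate point is making the two lower bounds (Lemma \ref{lemma2.6}) and tail estimate (Lemma \ref{lemma2.7}) genuinely incompatible; this requires the constant $c^\ast$ from Lemma \ref{lemma2.5} to be independent of $\lambda$ so that $\varepsilon$ in Lemma \ref{lemma2.7} can be fixed before enlarging $\lambda$. The exponent arithmetic at the end, verifying that $4N/(N+\alpha)$ and $2N(p+1)/(N+\alpha)$ both fall in $[2,2_s^\ast)$ under the stated range of $\alpha$ and $p$, is also where the hypothesis $\alpha>(N-4s)^+$ is essential.
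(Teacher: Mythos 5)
Your proposal is correct and follows essentially the same route as the paper's own proof: decompose via Lemma \ref{lemma2.8}, assume the residual level $c'=c-J_\lambda(w)$ is positive, combine the uniform lower bound of Lemmas \ref{lemma2.5}--\ref{lemma2.6} with the tail estimate of Lemma \ref{lemma2.7} (fixing $\varepsilon$ in terms of the $\lambda$-independent constant $c^\ast$ before choosing $\Lambda_0$), and contradict this with the compact local embedding. Your explicit verification that $4N/(N+\alpha)$ and $2N(p+1)/(N+\alpha)$ lie in $[2,2_s^\ast)$ is a useful elaboration of a step the paper leaves implicit, but it is not a different argument.
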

\begin{proof}
  %We choose $\varepsilon\in (0,\frac{c^{*}\delta_2}{2})$, where $c^{*}$ and $\delta_2$ is defined in Lemmas \ref{lemma2.5} and \ref{lemma2.6} respectively. Then for given $\bar{C}>0$, we choose $\Lambda$ and $\R$ as in Lemma \ref{lemma2.7}. Thus we claim that $\Lambda_0:=\Lambda$ is the constant as required in Proposition 1.

  Let $\{w_n\}$ be a $(PS)_{c}$ sequence of $J_\lambda$, where $\lambda>\Lambda_0$ and $c\leq \widetilde{C}$, then as a direct consequence of Lemma \ref{lemma2.5}, we know $\{w_n\}$ is bounded in $E_{\lambda}$. Without loss of generality,  there exists some $w\in E_\lambda$ such that  $w_n\rightharpoonup w$ in $E_\lambda$ up to a subsequence, moreover $v_n=w_n-w$ is a $(PS)_{c-J_{\lambda}(w)}$ sequence which follows by Lemma \ref{lemma2.8}.

We claim that $d:=c-J_{\lambda}(w)=0$. If not, we suppose that $d>0$. It follows from the
 Lemmas \ref{lemma2.5} and \ref{lemma2.6} that there exists some $c_{\ast}>0$ satisfies $ d\geq c_{*}$  and
\begin{equation}\label{eqs2.30}
\displaystyle\liminf_{n\rightarrow +\infty}\Big\{\int_{\R^N}|f(v_n(x,0))v_n(x,0)|^{\frac{2N}{N+\alpha}}dx\Big\}^{\frac{N+\alpha}{N}}\geq \delta_2 d\geq \delta_2 c_{*}>0.
\end{equation}
Let $\varepsilon\in (0,\delta_2c_{*}/2)$ and  $\Lambda_0:=\Lambda_\varepsilon$, by Lemma \ref{lemma2.7},
 we then deduce that
\begin{equation}\label{eqs2.31}
\limsup_{n\rightarrow+\infty}\Big\{\int_{\R^N\setminus B_{R_\varepsilon}(0)}|f(v_n(x,0))v_n(x,0)|^{\frac{2N}{N+\alpha}}dx\Big\}^{\frac{N+\alpha}{N}}\leq \frac{1}{2}\delta_2c_{*} \ \  \forall \lambda \geq \Lambda_0,
\end{equation}
where $R_{\varepsilon}$  is given in Lemma \ref{lemma2.7}.
From \eqref{eqs2.30} and \eqref{eqs2.31}, we  get
\begin{equation}\label{eqs2.32}
\liminf_{n\rightarrow+\infty}\Big\{\int_{ B_{R_\varepsilon}(0)}|f(v_n(x,0))v_n(x,0)|^{\frac{2N}{N+\alpha}}dx\Big\}^{\frac{N+\alpha}{N}}\geq \frac{1}{2}\delta_2c_{*}>0.
\end{equation}
However, since $E_{\lambda}$ embedded into $L_{loc}^{q}(\R^N)$ compactly for $2\leq q <\frac{2N}{N-2s}$, thus
\begin{equation}
\liminf_{n\rightarrow +\infty}\Big\{\int_{ B_{R_\varepsilon}(0)}|f(v_n(x,0))v_n(x,0)|^{\frac{2N}{N+\alpha}}dx\Big\}^{\frac{N+\alpha}{N}}=0,
\end{equation}
which contradicts to \eqref{eqs2.32}. So $d=0$ and $\{v_n\}$ is a $(PS)_{0}$ sequence. Therefore by  \eqref{eqs2.7} we deduce that $v_{n}\rightarrow 0$ in $E_{\lambda}$, which implies that  $J_{\lambda}$ satisfies $(PS)_c$ condition for $c\in [0,\widetilde{{C}}]$ provided $\lambda>\Lambda_0$.
\end{proof}

\section{\label{Deg} Limit problem }

Recall that the following problem can be seen as the limit problem of equation \eqref{eqs1.6}
\begin{equation}\label{eqs3.1}
  \begin{cases}
    -div (y^{1-2s}\nabla w)=0 ~~\text{in}~~\R_{+}^{N+1}, \\
    \frac{\partial w}{\partial \nu}= \displaystyle\left(\int_{\Omega}\frac{F(w(z))}{|x-z|^{N-\alpha}}dz\right)f(w) ~~ \text{on}~~ \Omega\times \{0\}, \\
    v=0 ~~\text{on}~~ \R^{N} \setminus \Omega \times \{0\}
  \end{cases}
\end{equation}
and the corresponding functional of equation \eqref{eqs3.1} is defined by
$$
J_{0}(w):=\frac{1}{2}\int_{\R_{+}^{N+1}}y^{1-2s}|\nabla w|^{2}dxdy -\frac{1}{2}\int_{\Omega}\int_{\Omega}\frac{F(w(x,0))f(w(z,0))w(z,0)}{|x-z|^{N-\alpha}}dxdz, ~~ \forall  w\in E_0.
$$
As defined in Section 1,
$$
c(\Omega):=\inf_{w\in \mathcal{N}_{0}} J_{0}(w)
$$
 is the infimum of $J_0$ on the Nehari manifold $\mathcal{N}_{0}$. In the following part, we want to prove $c(\Omega)$ is achieved. To show that, we firstly give an embedding lemma which is standard.
\begin{lemma}\label{lemma3.1}
   The embedding $tr_{\Omega} E_0 \hookrightarrow L^{q}(\Omega)$ is compact for $q\in [2, 2_{s}^{*})$.
\end{lemma}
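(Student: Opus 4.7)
The plan is to reduce the statement to a combination of two facts already stated in the excerpt: the continuous embedding $E\hookrightarrow L^{2_s^*}(\R^N)$ and the compact local embedding $E\hookrightarrow L^q_{\rm loc}(\R^N)$ for $q\in[2,2_s^*)$. Since the assumption $(V_2)$ forces $\Omega\subset\{V\le M\}$ to have finite Lebesgue measure, the support constraint built into $E_0$ is strong enough to convert local compactness into global compactness on $\Omega$.

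First I would take an arbitrary bounded sequence $\{w_n\}\subset E_0$, with $\|w_n\|\le C$, and set $u_n:=w_n(\cdot,0)$. By definition of $E_0$, each $u_n$ is supported in $\overline{\Omega}$, and the continuous embedding yields $\|u_n\|_{L^{2_s^*}(\R^N)}\le C'$. By the compact embedding $E\hookrightarrow L^q_{\rm loc}(\R^N)$ applied to the exponent $q\in[2,2_s^*)$ together with a standard diagonal argument over balls $B_R(0)$ with $R\to\infty$, I can pass to a subsequence (not relabeled) and find $u\in L^q_{\rm loc}(\R^N)$, supported in $\overline{\Omega}$, such that $u_n\to u$ in $L^q(B_R(0))$ for every $R>0$ and a.e.\ on $\R^N$. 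Fatou's lemma also guarantees $u\in L^{2_s^*}(\R^N)$.

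Next I would handle the tail. Using H\"older's inequality with exponent $2_s^*/q>1$ and the finite measure of $\Omega$:
\begin{equation*}
\int_{\Omega\setminus B_R(0)}|u_n-u|^q\,dx\le \bigl|\Omega\setminus B_R(0)\bigr|^{1-q/2_s^*}\bigl\|u_n-u\bigr\|_{L^{2_s^*}(\R^N)}^{q}\le C''\bigl|\Omega\setminus B_R(0)\bigr|^{1-q/2_s^*}.
\end{equation*}
Because $\mu(\Omega)<\infty$ (by $(V_2)$), the right-hand side tends to $0$ as $R\to\infty$, uniformly in $n$. Given $\varepsilon>0$, I fix $R$ so that this tail is less than $\varepsilon/2$, and then use local convergence on $B_R(0)$ to ensure $\int_{\Omega\cap B_R(0)}|u_n-u|^q\,dx<\varepsilon/2$ for $n$ large. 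Adding the two pieces and recalling that $u_n,u$ vanish outside $\overline{\Omega}$ yields $u_n\to u$ in $L^q(\Omega)$, which is the desired compactness.

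The step that requires the most care is the tail bound, because $\Omega$ is not assumed bounded in $(V_1)$--$(V_2)$; only its finite measure is available. Thus the interplay between the support condition defining $E_0$ and the $L^{2_s^*}$-control coming from the Sobolev embedding is essential. If $\Omega$ were bounded, the whole argument would collapse to a direct application of the local compact embedding on any ball containing $\overline{\Omega}$, which is presumably why the author calls the lemma standard.
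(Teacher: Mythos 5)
Your proof is correct, but it takes a genuinely different route from the paper's. The paper disposes of the lemma in one line: it observes that $tr_{\Omega}E_0\subset H^{s}(\Omega)$ and invokes the compactness of the embedding $H^{s}(\Omega)\hookrightarrow L^{q}(\Omega)$ for $q\in[2,2_{s}^{*})$. You instead rebuild the compactness from the two facts about $E$ recorded in Section 1 --- the continuous embedding into $L^{2_{s}^{*}}(\R^{N})$ and the compact embedding into $L_{loc}^{q}(\R^{N})$ --- combined with the observation that $(V_1)$--$(V_2)$ force $\mu(\Omega)<\infty$, which lets you make the tail $\int_{\Omega\setminus B_{R}(0)}|u_n-u|^{q}\,dx$ small uniformly in $n$ via H\"older. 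What your argument buys is robustness: the Rellich--Kondrachov theorem for fractional Sobolev spaces (e.g.\ in the Di Nezza--Palatucci--Valdinoci reference the paper cites) is stated for bounded extension domains, whereas $(V_1)$--$(V_2)$ only guarantee that $\Omega$ has smooth boundary and finite measure, not that it is bounded; your tail estimate closes precisely this case, exploiting that elements of $tr_{\Omega}E_0$ are globally defined traces vanishing off $\overline{\Omega}$ rather than abstract elements of $H^{s}(\Omega)$. The paper's one-liner is shorter but implicitly assumes $\Omega$ is bounded (or that the compact embedding persists for finite-measure domains), so your version is, if anything, the more complete of the two.
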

\begin{proof}
  The proof is trivial. Since $tr_{\Omega}E_{0}\subset H^{s}(\Omega)$ and the embedding $H^{s}(\Omega)\hookrightarrow L^{q}(\Omega)$ is compact for $q\in[2, 2_{s}^{*})$, hence  the embedding $tr_{\Omega} E_0 \hookrightarrow L^{q}(\Omega)$ is compact for $q\in [2, 2_{s}^{*})$.
\end{proof}
\begin{lemma}
  The infimum $c(\Omega)$ is achieved by a function $w_0\in \mathcal{N}_0$ which is a least energy solution to \eqref{eqs3.1}.
\end{lemma}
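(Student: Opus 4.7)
The plan is the standard Nehari--manifold minimization argument, with the nonlocal Choquard term handled by the compact trace embedding of Lemma~\ref{lemma3.1}. A key algebraic identity, valid for any $w\in E_0$, is
\begin{equation*}
J_0(w)-\tfrac14\langle J_0'(w),w\rangle=\tfrac14\int_{\R_{+}^{N+1}}y^{1-2s}|\nabla w|^2\,dxdy+\tfrac14\int_\Omega\int_\Omega\frac{F(w(x,0))\bigl[f(w(z,0))w(z,0)-2F(w(z,0))\bigr]}{|x-z|^{N-\alpha}}\,dxdz,
\end{equation*}
whose second integral is non-negative by Remark~\ref{re}. Restricted to $\mathcal{N}_0$ this yields $J_0(w)\geq\tfrac14\|w\|_{X^s}^2$, while, imitating the proof of Lemma~\ref{lemma2.4} (Hardy--Littlewood--Sobolev combined with $(f_1),(f_2)$ and the continuous embedding $E_0\hookrightarrow L^q(\Omega)$ for $q\in[2,2_s^*]$), one extracts a uniform lower bound $\|w\|_{X^s}\geq\sigma_0'>0$ on $\mathcal{N}_0$. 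In particular $c(\Omega)\geq\tfrac14(\sigma_0')^2>0$.

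Now take a minimizing sequence $\{w_n\}\subset\mathcal{N}_0$ with $J_0(w_n)\to c(\Omega)$. Boundedness of $\{w_n\}$ in $E_0$ follows from the identity above, so up to a subsequence $w_n\rightharpoonup w_0$ in $E_0$. Lemma~\ref{lemma3.1} then delivers $w_n(\cdot,0)\to w_0(\cdot,0)$ strongly in $L^q(\Omega)$ for every $q\in[2,2_s^*)$. Under $(f_1),(f_2)$ and the restriction $\alpha>(N-4s)^+$, both exponents $\tfrac{4N}{N+\alpha}$ and $\tfrac{2N(p+1)}{N+\alpha}$ lie in $[2,2_s^*)$, hence $F(w_n(\cdot,0))\to F(w_0(\cdot,0))$ and $f(w_n(\cdot,0))w_n(\cdot,0)\to f(w_0(\cdot,0))w_0(\cdot,0)$ in $L^{2N/(N+\alpha)}(\Omega)$. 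The Hardy--Littlewood--Sobolev inequality (Lemma~\ref{HLS}) then transfers these convergences to the Choquard-type integrals appearing in $J_0$ and in $\langle J_0',\cdot\rangle$. Since each $w_n\in\mathcal{N}_0$ satisfies $\|w_n\|_{X^s}^2=\int_\Omega\int_\Omega\frac{F(w_n(x,0))f(w_n(z,0))w_n(z,0)}{|x-z|^{N-\alpha}}\,dxdz\geq(\sigma_0')^2$, passing to the limit on the right-hand side forces $w_0\neq 0$.

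The main obstacle is promoting $w_0$ to $\mathcal{N}_0$, because only weak lower semicontinuity (not weak continuity) is available for $\|\cdot\|_{X^s}^2$. I would use the fiber map: hypothesis $(f_3)$ ensures that for each $w\in E_0\setminus\{0\}$ the function $t\mapsto J_0(tw)$ has a unique critical point $t_w>0$ which is its global maximum, and $t_ww\in\mathcal{N}_0$. From weak lower semicontinuity of $\|\cdot\|_{X^s}^2$ and the strong Choquard convergence above,
\begin{equation*}
\langle J_0'(w_0),w_0\rangle\leq\liminf_{n\to\infty}\langle J_0'(w_n),w_n\rangle=0.
\end{equation*}
Assume for contradiction $\langle J_0'(w_0),w_0\rangle<0$; then $t_0:=t_{w_0}\in(0,1)$. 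Using $(f_3)$ one verifies that both $F(s)$ and $f(s)s-2F(s)$ are non-negative and non-decreasing in $|s|$, so the map
\begin{equation*}
\tau\mapsto\tfrac{\tau^2}{4}\|w_0\|_{X^s}^2+\tfrac14\int_\Omega\int_\Omega\frac{F(\tau w_0(x,0))\bigl[f(\tau w_0(z,0))\tau w_0(z,0)-2F(\tau w_0(z,0))\bigr]}{|x-z|^{N-\alpha}}\,dxdz
\end{equation*}
is strictly increasing in $\tau>0$ (strict monotonicity from the first summand since $w_0\neq 0$). Therefore
\begin{equation*}
c(\Omega)\leq J_0(t_0w_0)=J_0(t_0w_0)-\tfrac14\langle J_0'(t_0w_0),t_0w_0\rangle<J_0(w_0)-\tfrac14\langle J_0'(w_0),w_0\rangle\leq\liminf_{n\to\infty}J_0(w_n)=c(\Omega),
\end{equation*}
a contradiction. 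Hence $w_0\in\mathcal{N}_0$, and combining $J_0(w_0)\geq c(\Omega)$ with the lower-semicontinuity bound gives $J_0(w_0)=c(\Omega)$.

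To finish, $w_0$ must be a critical point of $J_0$. Setting $\Phi(w):=\langle J_0'(w),w\rangle$, hypothesis $(f_3)$ forces $\langle\Phi'(w_0),w_0\rangle<0$ (the non-degeneracy of the fiber map at its maximum $t_{w_0}=1$), so the Lagrange multiplier associated with the constrained minimization on $\mathcal{N}_0$ must vanish and $J_0'(w_0)=0$. Therefore $u_0(x):=w_0(x,0)$ is a least energy solution of \eqref{eqs3.1}.
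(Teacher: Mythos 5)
Your proposal is correct, but it follows a genuinely different route from the paper. The paper does not work with a bare minimizing sequence on $\mathcal{N}_0$: it invokes Ekeland's Variational Principle to produce a full $(PS)_{c(\Omega)}$ sequence, i.e.\ $J_0(w_n)\to c(\Omega)$ \emph{and} $J_0'(w_n)\to 0$. With $J_0'(w_n)\to 0$ in hand, the weak limit $w_0$ is automatically a critical point of $J_0$, and strong convergence $w_n\to w_0$ in $E_0$ follows in one line from the Hilbert-space identity $\|w_n-w_0\|^2=\|w_n\|^2-\|w_0\|^2+o_n(1)$, the relations $\langle J_0'(w_n),w_n\rangle=o_n(1)$ and $\langle J_0'(w_0),w_0\rangle=0$, and the compact trace embedding of Lemma~\ref{lemma3.1} applied to the Choquard terms; this gives $J_0(w_0)=c(\Omega)$ and $J_0'(w_0)=0$ simultaneously. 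You instead take a plain minimizing sequence and must therefore do two extra pieces of work: the fiber-map/projection argument (using $(f_3)$ and the monotonicity of $\tau\mapsto J_0(\tau w)-\tfrac14\langle J_0'(\tau w),\tau w\rangle$) to force the weak limit onto $\mathcal{N}_0$, and the Lagrange-multiplier step with $\langle\Phi'(w_0),w_0\rangle<0$ to upgrade the constrained minimizer to a free critical point. Both arguments are sound; yours leans more heavily on $(f_3)$ and on $f\in\mathcal{C}^1$ (needed for $\Phi\in\mathcal{C}^1$ and the nondegeneracy computation), while the paper's is shorter but quietly presupposes that Ekeland on the Nehari manifold yields a $(PS)$ sequence for the unconstrained functional, which itself rests on the same Nehari-manifold structure you make explicit. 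A point in your favor: you verify $c(\Omega)>0$ and $w_0\neq 0$ via the uniform lower bound $\|w\|_{X^s}\geq\sigma_0'$ on $\mathcal{N}_0$, a step the paper's proof leaves implicit (if $w_0=0$ its argument would only give $c(\Omega)=0$, which must be excluded exactly as you do).
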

\begin{proof}
  By Ekeland's Variational Principle, there exist a $(PS)_{c(\Omega)}$ sequence $\{w_{n}\}\subset E_0$ such that
  $$
  J_0(w_n)\rightarrow c(\Omega) \ \ \text{and} \ \ J'_{0}(w_n)\rightarrow 0.
  $$
  Thus we have
  \begin{align*}
    c(\Omega)+o_{n}(1)\|w_n\|&\geq J_{0}(w_n)-\frac{1}{4}\langle J'_{0}(w_n), w_n\rangle \\
    &\geq \displaystyle\frac{1}{4}\int_{\R_{+}^{N+1}}y^{1-2s}|\nabla w_n|^{2}dxdy + \frac{1}{4}\int_{\Omega}\int_{\Omega}\frac{F(w_n(x,0))\Big(f(w_n(z,0))w_n(z,0)-2F(w_n(z,0))\Big)}{|x-z|^{N-\alpha}}dxdz \\
    &\geq \frac{1}{4}\int_{\R_{+}^{N+1}}y^{1-2s}|\nabla w_n|^{2}dxdy \\
    & \geq \frac{1}{8} \int_{\R_{+}^{N+1}}y^{1-2s}|\nabla w_n|^{2}dxdy + \frac{C_5}{8}\Big(\int_{\Omega}|w_n(x,0)|^{2_{s}^{*}}dx\Big)^\frac{2}{2_{s}^{*}} \\
    & \geq \frac{1}{8} \int_{\R_{+}^{N+1}}y^{1-2s}|\nabla w_n|^{2}dxdy + \frac{C_6}{8}\int_{\Omega}|w_n(x,0)|^{2}dx\\
    &\geq C \|w_n\|^2,
  \end{align*}
 where we choose $C\leq \min \{1/8, {C_6}/8\}$. Thus $\{w_n\}$ is bounded in $E_0$. Furthermore, there exists a $w_0 \in E_0$ such that $w_{n}\rightharpoonup w_0$ in $E_0$ up to a subsequence, furthermore by Lemma \ref{lemma3.1}, $w_n \to w_0$ in $L^{q}(\Omega)$ with $q\in [2,2_{s}^{*})$. Then
 \begin{align*}
 \|w_n-w_0\|^2&=\int_{\R_{+}^{N+1}}y^{1-2s}|\nabla w_n-\nabla w_0|^{2}dxdy + \int_{\Omega} |w_n(x,0)-w(x,0)|^{2}dx \\
  &=\int_{\R_{+}^{N+1}}y^{1-2s}|\nabla w_n|^{2}dxdy+\int_{\Omega} |w_n(x,0)|^{2}dx \\
  & \ \ \ \ \ \ \ \ \ \ \ \ \ \ \ \ \ \ \ \ \ \ \ \ \ \ \ \ \ \ \ \ \ \ \ \
  -\int_{\R_{+}^{N+1}}y^{1-2s}|\nabla w|^{2}dxdy-\int_{\Omega}|w(x,0)|^{2}dx +o_{n}(1) \\
  &= \int_{\Omega}\int_{\Omega}\frac{F(w_n(x,0))f(w_n(z,0))w_n(z,0)}{|x-z|^{N-\alpha}}dxdz -\int_{\Omega}\int_{\Omega}\frac{F(w(x,0))f(w(z,0))w(z,0)}{|x-z|^{N-\alpha}}dxdz +o_{n}(1) \\
  & \to 0 \ \ \text{as}~~ n \to \infty.
\end{align*}
 Thus $w_{n} \to w_0$ strongly in $E_0$, furthermore $J_0(w_0)=c(\Omega)$ and $J'_{0}(w_0)=0$ . Therefore $w_0$ is a least energy solution to equation \eqref{eqs3.1} and we complete the proof.
\end{proof}
\begin{remark}
  If $f$ is odd and satisfies $f(t)\geq 0$ for $t\in [0,+\infty)$, with a similar argument in Theorem 6.3  \cite{LL}(can also be seen in the proof of Theorem 1  \cite{CL} or Proposition 5.2 \cite{MS2}), we can prove $u_0=w_0(x,0)$ is nonnegative.
\end{remark}

\section{\label{Deg} Proof of the main results}

This section is devoted to prove our  main results. Regarding $\lambda$ as a parameter and let $\lambda$ towards to infinity, we first prove the following proposition, which describes an important relation between $c_\lambda$ and $c(\Omega)$.

\begin{lemma}\label{lemma4.1}
  $c_{\lambda}\rightarrow c(\Omega)$ as $\lambda \rightarrow +\infty$.
\end{lemma}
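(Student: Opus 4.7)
The strategy is to verify both $\limsup_{\lambda \to +\infty} c_\lambda \leq c(\Omega)$ and $\liminf_{\lambda \to +\infty} c_\lambda \geq c(\Omega)$. The upper bound is immediate: by $(V_1)$, any $w \in E_0$ vanishes on $\R^N \setminus \Omega$ while $V \equiv 0$ on $\bar\Omega$, so $\int_{\R^N} \lambda V(x)|w(x,0)|^2 dx = 0$, which means $E_0 \subset E_\lambda$ with $J_\lambda|_{E_0} = J_0$ and $\mathcal{N}_0 \subset \mathcal{N}_\lambda$. Applying this to the minimizer $w_0$ of $c(\Omega)$ produced in the previous lemma yields $c_\lambda \leq J_\lambda(w_0) = J_0(w_0) = c(\Omega)$ for every $\lambda > 0$.

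For the lower bound, fix $\lambda_n \to +\infty$. Since $c_{\lambda_n} \leq c(\Omega)$, Proposition~\ref{pro2.1} with $\widetilde{C} = c(\Omega)$ provides the $(PS)_{c_{\lambda_n}}$ condition for large $n$, so a standard Ekeland argument on $\mathcal{N}_{\lambda_n}$ yields minimizers $w_n \in \mathcal{N}_{\lambda_n}$ with $J_{\lambda_n}(w_n) = c_{\lambda_n}$ and $J_{\lambda_n}'(w_n) = 0$. The estimate $\|w_n\|_{\lambda_n}^2 \leq 4 c_{\lambda_n} \leq 4 c(\Omega)$ from Lemma~\ref{lemma2.5} combined with Lemma~\ref{lemma2.2} bounds $\{w_n\}$ in $E$; extract $w_n \rightharpoonup w$ in $E$, so $w_n(\cdot,0) \to w(\cdot,0)$ in $L^q_{loc}(\R^N)$ for $q \in [2, 2_s^*)$ and almost everywhere on $\R^N$.

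I would then establish three facts about $w$. \textbf{First}, $w \in E_0$: Fatou applied to $\int_{\R^N} V |w_n(x,0)|^2 dx \leq \|w_n\|_{\lambda_n}^2/\lambda_n \to 0$ shows $V|w(x,0)|^2 = 0$ a.e., hence $w(x,0) = 0$ on $\R^N \setminus \bar\Omega$. \textbf{Second}, $w \neq 0$: if $w \equiv 0$, local $L^q$ convergence plus the tail estimate from Lemma~\ref{lemma2.7} adapted to the diagonal sequence (using $\int V|w_n|^2 \leq 4c(\Omega)/\lambda_n \to 0$) forces $w_n(\cdot,0) \to 0$ strongly in $L^q(\R^N)$ for $q \in [2, 2_s^*)$, and then the Hardy-Littlewood-Sobolev inequality together with $(f_1),(f_2)$ drives the nonlocal term in $\langle J_{\lambda_n}'(w_n), w_n\rangle = 0$ to zero, contradicting the uniform bound $\|w_n\|_{\lambda_n} \geq \sigma_0$ from Lemma~\ref{lemma2.4}. \textbf{Third}, $w \in \mathcal{N}_0$: test $J_{\lambda_n}'(w_n) = 0$ against $\varphi \in C_c^\infty(\overline{\R_+^{N+1}})$ with $\varphi(\cdot, 0) \in C_c^\infty(\Omega)$ and pass to the limit, treating the Choquard term as in Lemma~\ref{lemma2.8}, to obtain $J_0'(w) = 0$ on $E_0$; consequently $c(\Omega) \leq J_0(w) \leq \liminf J_{\lambda_n}(w_n) = \liminf c_{\lambda_n}$ by weak lower semicontinuity of the kinetic energy and strong $L^q$ convergence of the Choquard integrand on $\Omega$.

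The main obstacle is upgrading the weak convergence $w_n \rightharpoonup w$ to the strong control needed inside the nonlocal double integral, both in the $w \neq 0$ step and when passing to the limit in $J_{\lambda_n}'(w_n)$. Local compactness handles bounded sets routinely, but the tail behavior must be extracted from the potential-well structure $(V_2)$ via a diagonal version of Lemma~\ref{lemma2.7}; the parameter range $\alpha > (N-4s)^+$ and $p < (\alpha + 2s)/(N-2s)$ is exactly what places the relevant Hardy-Littlewood-Sobolev exponents $4N/(N+\alpha)$ and $2N(p+1)/(N+\alpha)$ inside $[2, 2_s^*)$, so that the embeddings and tail estimates close the argument.
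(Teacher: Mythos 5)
Your proposal is correct and follows the same overall architecture as the paper's proof: the inclusion $\mathcal{N}_0\subset\mathcal{N}_\lambda$ gives $c_\lambda\le c(\Omega)$; minimizers $w_n$ for $\lambda_n\to+\infty$ are produced via Proposition~\ref{pro2.1}, bounded in $E$ by the Nehari identity, and their weak limit $w$ is shown to lie in $E_0$ by Fatou applied to $\int_{\R^N}V|w_n(x,0)|^2\,dx\le 4c_{\lambda_n}/\lambda_n$; strong $L^q(\R^N)$ convergence then identifies $w$ as an element of $\mathcal{N}_0$ with $J_0(w)\le\liminf c_{\lambda_n}$. The one step where you take a genuinely different route is the upgrade from weak to strong $L^q(\R^N)$ convergence: you propose a diagonal version of the uniform tail estimate of Lemma~\ref{lemma2.7} (splitting $\R^N\setminus B_R$ into $\{V\ge M\}$, controlled by $\lambda_n^{-1}\|w_n\|_{\lambda_n}^2\to0$, and $\{V<M\}$, controlled by $(V_2)$), whereas the paper shows that $v_n=w_n-w$ vanishes in the sense of Lions by deriving a contradiction between a putative concentration at $|x_n|\to\infty$ and $\int V|w_n(x,0)|^2\,dx\to0$, and then invokes the concentration--compactness lemma \cite{pl}. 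Both rest on exactly the same structural input, but your tail-estimate route is arguably more self-contained (it reuses machinery already proved in Section~2 and yields the $q=2$ case directly, which the vanishing argument strictly speaking only gives for $q\in(2,2_s^*)$ after interpolation). You also add an explicit verification that $w\neq0$ via the uniform lower bound $\|w_n\|_{\lambda_n}\ge\sigma_0$ of Lemma~\ref{lemma2.4}; the paper asserts $w\in\mathcal{N}_0$ without addressing nontriviality, so this is a worthwhile supplement rather than a deviation.
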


\begin{proof}
  It is not difficult to find that $c_{\lambda}\leq c(\Omega)$ for each $\lambda\geq 0$.
  % If $c_{\lambda}\nrightarrow c(\Omega)$ as $\lambda \rightarrow \infty$, without loss of generality,  we suppose
%  $$
%  \lim_{n\rightarrow \infty}c_{\lambda_n}=k<c(\Omega), \ \ \text{as}~~ \lambda_n\rightarrow \infty.
%  $$
%  First of all, we assert $k>0$ which follows by Lemma \ref{lemma2.5}.
We shall proceed through several claims on analyzing the convergence property of $c_\lambda$ as $\lambda\to +\infty$.

  \noindent{\bf Claim 1.}
 There exists $\Lambda_0>0$ for all $\lambda\geq \Lambda_0$ such that $c_{\lambda}$ is achieved by a $w_{\lambda}\in E_{\lambda}$
 \begin{proof}[\bf Proof of Claim 1.]
  Since $c_{\lambda}\leq c(\Omega)$, then it follows from Proposition \ref{pro2.1} that there exists a $\Lambda_0>0$ such that for any $\lambda>\Lambda_0$, $c_\lambda$ is achieved by a critical point $w_\lambda\in E_\lambda$ of $J_{\lambda}$.
\end{proof}
    Let $\lambda_n \to \infty$, from the above commentaries, for each $\lambda_n$ there exists a  $w_{\lambda_n}\in E_{\lambda_n}$ with $J_{\lambda_n}(w_n)=c_{\lambda_n}$ and $J'_{\lambda_n}(w_n)=0$.
    With a similar argument as \eqref{eqs2.7}, we have  $\|w_n\|_{\lambda_n}\leq 4c_{\lambda_n}\leq 4c(\Omega)$. By using Lemma \ref{lemma2.2}, it yields that $\{w_n\}$ is bounded in $E$ for $n$ large enough. Hence, there exists a $w\in E$ such that $w_n\rightharpoonup w$ in $E$ up to a subsequence  and
  \begin{equation}\label{eqs4.1}
    w_{n}(x,0)\rightarrow w(x,0) ~~\text{in}~~ L_{loc}^{q}(\R^N)~~ \text{for} ~~ 2\leq q <2_{s}^{*}.
  \end{equation}

  \noindent{\bf Claim 2.}
   $w(x,0)=0$ a.e. in $\Omega^c$, where $\Omega^c=\{x\in \R^N~|~ x\notin \Omega\}$, hence $w(x,0)\in E_0$.
\begin{proof}[\bf Proof of Claim 2.]
  Since   $f(t)t\geq 2F(t)\geq 0$ for $t\in \R$,  we then have
\begin{align*}
 &J_{\lambda_n}(w_n)-\frac{1}{4}\langle J'_{\lambda_n}(w_n),w_n \rangle \\
=&\frac{1}{4}\int_{\R_{+}^{N+1}}y^{1-2s}|\nabla w_n|^{2}dxdy+\frac{\lambda_n}{4}\int_{\R^N}V(x)|w_{n}(x,0)|^{2}dx\\
&+\frac{1}{4}\int_{\R^N}\Big(I_{\alpha}\ast F(w_n(x,0)) \Big)\Big(2f(w_n(x,0))w_n(x,0)-F(w_n(x,0))\Big)dx \\
\geq &\frac{\lambda_n}{4}\int_{\R^N}V(x)|w_{n}(x,0)|^{2}dx.
\end{align*}
From the analysis above, we can conclude that
\begin{equation}\label{hu-a1}
 \int_{\R^N}V(x)|w_{n}(x,0)|^{2}dx \leq \frac{4c_{\lambda_n}}{\lambda_n}.
\end{equation}%\int_{\R^N}V(x)|w_{n}(x,0)|^{2}dx \leq \frac{4}{\lambda_n}(J_{\lambda_n}(w_n)+ o_n(1)\|w_n\|_{\lambda_n}).
 By Fatou's Lemma,
 $$
 \int_{\Omega^c}V(x)|w(x,0)|^2dx \leq \liminf_{n\rightarrow +\infty}\int_{\R^N}V(x)|w_{n}(x,0)|^{2}dx \leq 0,
 $$
 which implies that $V(x)|w(x,0)|^2=0$ a.e. in ${\Omega^c}$. Note that, by condition $(V_1)$, $V(x)\ne0$ a.e. in ${\Omega^c}$. Thus we have $w(x,0)=0$ a.e. in  ${\Omega^c}$.
 %As $V(x)\geq \varepsilon_0>0$ for all $x\in \Omega^{c}$ and  $\varepsilon_0>0$ fixed, then we have $w(x,0)=0$ for a.e.  $x\in {\Omega^c}$.
\end{proof}

\noindent{\bf Claim 3.}
 $w_n(x,0)\rightarrow w(x,0)$ strongly in $L^{q}(\R^N)$ for $2\leq q < 2_{s}^{*}$.
\begin{proof}[\bf Proof of Claim 3.]
Set $v_n=w_n-w$. We first assert that, for a fixed $r>0$,
\begin{equation*}
\widetilde{\delta}=\liminf_{n\rightarrow +\infty}\sup_{y\in \R^N}\int_{B_{r}(y)}|v_n(x,0)|^{2}dx >0.
\end{equation*}
Assume by contradiction that there exists a sequence $\{x_n\}\subset\R^N$ satisfies $|x_n|\to\infty$ and
 $$
 \int_{B_{r}(x_n)}|v_{n}(x,0)|^{2}dx\geq\widetilde{ \delta}/2 >0,\quad \text{for $n$ large enough.}
 $$
 Similar to \eqref{hu-a1}, one hand we have
 \begin{equation*}
    \lim_{n\to\infty}\int_{\R^N}V(x)|w_{n}(x,0)|^{2}dx =0.
 \end{equation*}
 On the other hand,
\begin{equation}\label{hu-a2}
\begin{split}
\int_{\R^N}V(x)|w_{n}(x,0)|^{2}dx
   &\geq \int_{B_{r}(x_n)\cap \{x|V(x)> M\}}V(x)|w_{n}(x,0)|^{2}dx  \\
   &=\int_{B_{r}(x_n)\cap \{x|V(x)> M\}}V(x)|v_{n}(x,0)|^{2}dx \\
   &\geq M\left(\int_{B_{r}(x_n)}|v_n(x,0)|^{2}dx-\int_{B_{r}(x_n)\cap \{x|V(x)\leq M\}}|v_{n}(x,0)|^{2}dx\right)\\
   &\geq \frac{M \widetilde{\delta}}{2}-o_n(1),
\end{split}
\end{equation}
where in the last inequality, we use the assumption $(V_2)$, that is $\mu\big( B_{r}(x_n)\cap \{x\mid V(x)\leq M\}\big)\rightarrow 0$ as $n\rightarrow \infty$, and the boundedness of $\{v_n\}$ in $E$.
Taking the limit $n\to\infty$ in \eqref{hu-a2}, we get a contradiction, hence $\widetilde{\delta}=0$ holds. Then by the Concentration Compactness Lemma \cite{pl}, we obtain $w_n(x,0)\rightarrow w(x,0)$  strongly in $L^{q}(\R^N)$ for $2\leq q< 2_{s}^{*}$.
\end{proof}
\noindent{\bf Completion of the Proof of Lemma 4.1:}
By  Claim 3, we then can easily prove $w$ is a  weak solution to the following problem
\begin{equation}\label{eqs4.2}
  \begin{cases}
    -div (y^{1-2s}\nabla w)=0 ~~\text{in}~~\R_{+}^{N+1}, \\
    \frac{\partial w}{\partial \nu}= \displaystyle\int_{\Omega}\frac{ F(w(z))}{|x-z|^{N-\alpha}}dzf(w) ~~ \text{on}~~ \Omega\times \{0\}, \\
    w=0 ~~\text{on}~~ \R^{N} \setminus \Omega \times \{0\}.
  \end{cases}
\end{equation}
Hence $w$ belongs to $\mathcal{N}_{0}$.
Furthermore, by using Hardy-Littlewood-Sobolev inequality and $w_n(x,0)\rightarrow w(x,0)$ strongly in $L^{q}(\R^N)$ for $2\leq q<2_{s}^{*}$ again, we get
\begin{align*}
 J_{\lambda_n}(w_n) &=\frac{1}{2} \int_{\R^N}\Big(I_{\alpha}\ast F(w_n(x,0))\Big) \Big(f(w_n(x,0))w_n(x,0)-F(w_n(x,0))\Big)dx \\
  &=\frac{1}{2} \int_{\Omega}\int_{\Omega}\frac{ F(w(x,0))\Big(f(w(z,0))w(z,0)-F(w(z,0))\Big)}{|x-z|^{N-\alpha}}dx dz +o_{n}(1)\\
  &=\frac{1}{2} \int_{\R_{+}^{N+1}}y^{1-2s}|\nabla w|^{2}dxdy-\frac{1}{2}\int_{\Omega}\int_{\Omega}\frac{F(w(x,0))F(w(z,0))}{|x-z|^{N-\alpha}}dxdz +o_{n}(1) \\
  &=J_0(w) +o_{n}(1).
\end{align*}
 Then $\displaystyle c(\Omega)\leq J_{0}(w)= \lim_{n\to \infty}J_{\lambda_n}(w_n)$ as $n\to \infty$, from which combining with conclusion $c_{\lambda_n}\leq c(\Omega)$, we complete the proof.
\end{proof}

Next we  give the proofs of Theorems \ref{th1.1} and \ref{th1.2}.
\begin{proof}[\bf Proof of Theorem \ref{th1.1}]
For $\lambda_n$ big enough we suppose that $c_{\lambda_n}$ is achieved by a critical point $w_{\lambda_n}\in E_{\lambda_n}$ of $J_{\lambda_n}$, i.e.
$J_{\lambda_n}(w_{\lambda_n})= c_{\lambda_n}$ and $J'_{\lambda_n}(w_{\lambda_n})= 0$. Let $u_{\lambda_n}=:w_{\lambda_n}(x,0)$.
The main result of Theorem \ref{th1.1} is to prove $\{u_{\lambda_n}\}$ converges to a least energy solution to equation \eqref{eqs1.7} in $E$ up to a subsequence as $\lambda_n \to \infty$.

With a similar argument in the proof of Lemma \ref{lemma4.1},
we can prove $\{w_{\lambda_n}\}$ is bounded in $E$, and there exists a $w_0\in E$ such that $w_{\lambda_n}\rightharpoonup w_0$ in $E$. Moreover,  $w_{\lambda_n}(x,0)\rightarrow w_0(x,0)$ strongly in $L^{q}(\R^N)$ for $q\in [2,2_{s}^{*})$. Thus, $w_0$ solves equation \eqref{eqs1.8} and $J_0(w_0)=c(\Omega)$.
Before closing the proof, we still need to prove that $w_n\to w_0$ strongly in $E$.
By calculation, we have
\begin{multline}\label{eqs4.3}
\|w_{\lambda_n}-w_0\|_{\lambda_n}^{2}=
\displaystyle\langle J'_{\lambda_n}(w_{\lambda_n}),w_{\lambda_n}\rangle-\langle J'_{\lambda_n}(w_{\lambda_n}), w_0\rangle\\
 +\int_{\R^N}\Big(I_{\alpha}\ast F(w_{\lambda_n}(x,0))\Big)f(w_{\lambda_n}(x,0))\big(w_{\lambda_n}(x,0)-w_0(x,0)\big)dx + o_{n}(1).
\end{multline}
Applying the Hardy-Littlewood-Sobolev inequality and $w_{\lambda_n}(x,0)\to w_0(x,0)$ strongly in $L^{q}(\R^N)$ for $q\in [2,2_{s}^{*})$, we deduce that
\begin{equation*}
\|w_{\lambda_n}-w_0\|_{\lambda_n}^{2}\to0, \;\;\text{as $n\to\infty$}.
\end{equation*}
Then, by Lemma \ref{lemma2.2}, we have $w_{\lambda_n}\to w_0$ strongly in $E$, furthermore $u_{\lambda_n}\to u_0:= w_{0}(x,0)$ strongly in $H^s(\R^N)$.
The proof is completed.
\end{proof}

\begin{proof}[\bf Proof of Theorem \ref{th1.2}]

Suppose $\{w_{\lambda_n} \}\subset H^{s}(\R^N)$  is a sequence of solutions to equation \eqref{eqs1.6}
with $\lambda$ being replaced by $\lambda_n$ and  $J_{\lambda_n}(w_{\lambda_n})<\infty$ as $\lambda_n \to \infty$, then we konw that $u_{\lambda_n} = w_{\lambda_n}(x, 0)$ satisfies equation \eqref{eqs1.1}. It is easy to see that  $\{w_{\lambda_n}\}$ must be bounded in $E$. We may assume that $w_{\lambda_n}\rightharpoonup w$ weakly in $E$ and
$w_{\lambda_n}(x,0)\rightarrow w(x,0)$ strongly in $L_{loc}^{q}(\R^N)$ for $q\in [2,2_{s}^{*})$.  Same as  the proof of Lemma \ref{lemma4.1}, we can
prove that $w|_{\Omega^c}= 0$ and $w\in E_0$ is solution to \eqref{eqs1.8}. Moreover $w_{\lambda_n}(x,0)\to w(x,0)$ strongly in $L^{q}(\R^N)$ for $q\in [2,2_{s}^{*})$. With a similar argument in the proof of Theorem \ref{th1.1},  we only need to prove $w_{\lambda_n} \to w$ strongly in $E$.
\begin{align*}
 &\int_{\R_{+}^{N+1}}y^{1-2s}|\nabla w_{\lambda_n}-\nabla w|^{2}dxdy + \int_{\R^N}\lambda_n V(x)|w_{\lambda_n}(x,0)-w(x,0)|^{2}dx \\
  &=\int_{\R_{+}^{N+1}}y^{1-2s}|\nabla w_{\lambda_n}|^{2}dxdy+\int_{\R^N}\lambda_n V(x)|w_{\lambda_n}(x,0)|^{2}dx \\
  & \ \ \ \ \ \
  -\int_{\R_{+}^{N+1}}y^{1-2s}|\nabla w|^{2}dxdy-\int_{\R^N}\lambda_n V(x)|w(x,0)|^{2}dx +o_{n}(1) \\
  &= \int_{\R^N}\Big(I_{\alpha}\ast F(w_{\lambda_n}(x,0))\Big)f(w_{\lambda_n}(x,0))w_{\lambda_n}(x,0)dx -\int_{\Omega}\int_{\Omega}\frac{F(w(x,0))f(w(z,0))w(z,0)}{|x-z|^{N-\alpha}}dxdz +o_{n}(1) \\
  & \to 0 \ \ \text{as}~~ n \to \infty.
\end{align*}
Thus we have $w_{\lambda_n} \to w $ strongly in $E$ and  complete the proof.

\end{proof}

\noindent {\bf Acknowledgements:}
The authors would like to thank Prof. Shuangjie Peng  very much for helpful suggestions on the present paper.


\begin{thebibliography}{99}
	\bibliographystyle{alpha}


	
\bibitem{ANY} C. O. Alves, A. B.  N\'{o}brega, M. Yang. Multi-bump solutions for Choquard equation with deepening potential well. {\it Cal. Var. Partial Differential Equations,} (2016), 48.

\bibitem{Ap}
D. Applebaum.
L\'{e}vy processes-from probability to finance and quantum groups.
{\it Notices Amer. Math. Soc.,}
{\bf 51} (2004), 1336--1347.
	
\bibitem {BW1} T. Bartsch, Z. Q. Wang. Existence and multiplicity results for superlinear elliptic problems on $\R^N$. {\it Comm. Partial Differential Equations,}  {\bf 20} (1995), 1725--1741.


\bibitem{BWBW}
T. Bartsch, Z. Q. Wang.
  Multiple positive solutions for a nonlinear Schr\"{o}dinger equation.
  {\it Z. Angew. Math. Phys.,}
 {\bf 51} (2000), 366--384.	
	
\bibitem{BCD}
C. Br\"{a}ndle, E. Colorado, A. De Pablo, et al.
A concave-convex elliptic problem involving the fractional Laplacian.
{\it Proc. Roy. Soc. Edinburgh Sect. A,}
{\bf 143} (2010), 39--71.	

	
	\bibitem{CS6}
	X. Cabr\'{e}, Y. Sire.
	Nonlinear equations for fractional Laplacians II: Existence, uniqueness, and qualitative properties of solutions.
	{\it Tran. Amer. Math. Soc.},
	{\bf 367} (2011), 911--941.
	
	
	\bibitem{CT}
	X. Cabr\'{e}, J. Tan.
	Positive solutions of nonlinear problems involving the square root of the Laplacian.
	{\it 	Adv. Math.},
	{\bf 224} (2010), 2052--2093.


	
	\bibitem{CS}
	L. Caffarelli, L. Silvestre.
	An extension problem related to the fractional Laplacian.
	{\it Comm. Partial Differential Equations},
	{\bf 32} (2007), 1245--1260.
	
	%\bibitem{CV}
%	L. Caffarelli, E. Valdinoci.
%	Uniform estimates and limiting arguments for nonlocal minimal surfaces.
%	{\it Cal. Var. Partial Differential Equations},
%	{\bf 41} (2011), 203--240.

	
    %\bibitem{CM}
%    S. Y. A. Chang, M. del Mar Gonz\'{a}lez.
%    Fractional Laplacian in conformal geometry.
%    {\it Adv. Math.,}
%    {\bf 226} (2010), 1410--1432.

    \bibitem{CW}
    X. Chang, Z. Q. Wang.
    Nodal and multiple solutions of nonlinear problems involving the fractional Laplacian.
    {\it J. Differential Equations,}
    {\bf 258} (2014), 2965--2992.

\bibitem{CL} Y. Chen, C. Liu.  Ground state solutions for non-autonomous fractional Choquard equations.
{\it Nonlinearity,}
{\bf 29} (2016), 1827--1842.


    \bibitem{CR}
    M. Chipot, J.  Rodrigues. On a class of nonlocal nonlinear elliptic problems.
    {\it RAIRO Mod\'{e}l. Math. Anal. Num\'{e}r.,}
    {\bf 26} (1992), 447--467.

\bibitem{CHKL}
Y. Cho, G. Hwang, S. Kwon, S. Lee.
 Profile decompositions and Blowup phenomena of mass critical fractional Schr\"odinger equations.
{\it Nonlinear Analysis,}
{\bf 86} (2012),  12--29.	


\bibitem{CCS} S. Cingolani, M. Clapp, S. Secchi.  Multiple solutions to a magnetic nonlinear Choquard equation.
    {\it  Z. Angew. Math. Phys.,}
    {\bf 63} (2012),  233--248.


	\bibitem{CS1}  M. Clapp, D. Salazar.  Positive and sign changing solutions to a nonlinear Choquard equation.
{\it J. Math. Anal. Appl.,}
{\bf 407} (2013), 1--15.


	
\bibitem {ASS} P. d'Avenia,  G. Siciliano,  M. Squassina.   On fractional Choquard equations
{\it Mathematical Models and Methods in Applied Sciences,}
{\bf 25} (2015) 1447--1476.

	\bibitem {DPV}
	E. Di Nezza, G. Palatucci, E. Valdinoci.
	Hitchhiker's guide to the fractional Sobolev spaces.
	{\it Bull. Sci. Math.,}
	{\bf 136} (2012), 521--573.
	
\bibitem{DT}  Y. Ding, K. Tanaka. Multiplicity of positive solutions of a nonlinear Schr\"odinger equation. {\it Manus. Math.,}
    {\bf 112} (2003), 109--135.	

	\bibitem {FW}
	M. Fall, T. Weth.
	Nonexistence results for a class of fractional elliptic boundary value problems.
	{\it J. Funct. Anal.,}
	{\bf 263} (2012), 2205-2227.

\bibitem{FL}
R. L. Frank, E. Lenzmann.
On ground states for the $L^2$-critical boson star equation.
{\it Eprint Arxiv,} (2009), arXiv:0910.2721v2	

%\bibitem{FL1}
%J. Fr\"{o}hlich, E. Lenzmann.
%Blow up for nonlinear wave equations describing boson stars.
%{\it Comm. Pure Appl. Math.,}
%{\bf 60} (2007),   1691--1705.



\bibitem{G} E. P. Gross.  Physics of many-Particle systems.  Gordon Breach, New York,  (1996).	
	

\bibitem{HL}	
S. Herr, E. Lenzmann.
The Boson star equation with initial data of low regularity.
{\it Nonlinear Analysis,}
{\bf97}  (2013),  125--137.



    \bibitem {La1}
    N. Laskin,
    Fractional quantum mechanics and L\'{e}vy path integrals,
	{\it Phys. Lett. A,}
	{\bf 268} (2000), 298--305.
	

	
	\bibitem{LY} G. Li, H. Ye. The existence of positive solutions with prescribed $L^2$-norm for nonlinear Choquard equations.
{\it J. Math. Phys.,}
{\bf 55} (2014), 121501.


	
\bibitem {EHL}
E. H. Lieb.
Existence and Uniqueness of the Minimizing Solution of Choquard's Nonlinear Equation.
{\it Stud. Appl. Math.,}
{\bf 57}   (1977), 93--105.	

\bibitem{LL} E. H. Lieb,  M. Loss.  Analysis. {\it Gradute Studies in Mathematics, AMS, Providence, Rhodeisland,} (2001).



\bibitem{LS} E. H. Lieb, B. Simon. The Hartree-Fock theory for Coulomb systems.
{\it Comm. Math. Phys.,}
{\bf 53}  (1977),  185--194.


	\bibitem{pl}
 	 P. L. Lions.
  The concentration-compactness principle in the calculus of variations. The locally compact case. I.
  {\it Ann.  Inst. H. Poincar\'{e} Anal. Non Lin\'{e}ar,}
  {\bf1} (1984), 109--145.
	


\bibitem{MZ} L. Ma, L. Zhao. Classification of Positive Solitary Solutions of the Nonlinear Choquard Equation. {\it Arch. Ration. Mech. Anal.,}
    {\bf 195} (2010), 455--467.

\bibitem{MS1} V. Moroz, J. Van Schaftingen. Groundstates of nonlinear Choquard equations: Existence, qualitative properties and decay asymptotics.
    {\it J. Funct. Anal.,}
    {\bf 265} (2013), 153--184.

\bibitem{MS2} V. Moroz, J. Van Schaftingen. Existence of groundstates for a class of nonlinear Choquard equations.
    {\it Trans. Amer. Math. Soc.,}
    {\bf 367} (2015), 6557--6579.

\bibitem{MS3} V. Moroz, J. Van Schaftingen. Groundstates of nonlinear Choquard equations: Hardy-Littlewood-Sobolev critical exponent.
    {\it Commun. Contemp. Math.,}
    {\bf 17} (2015), 1550005.


\bibitem{MPT} I. Moroz, R. Penrose, P. Tod. Spherically-symmetric solutions of the Schr\"odinger-Newton equations.
    {\it Classical Quantum Gravity,}
    {\bf 15} (1998), 2733--2742.

\bibitem{NT} M. Niu, Z. Tang.
Least energy solutions of nonlinear Schr\"{o}dinger equations involving the fractional Laplacian and potential wells.
{\it Sci. China Math.,}
{\bf 60} (2017),  261--276.

\bibitem{P} S. Pekar. Untersuchung\"{u}ber die Elektronentheorie der Kristalle, Akademie Verlag, Berlin, (1954).

\bibitem{P1} R. Penrose. On gravity's role in quantum state reduction.
{\it Gen. Rel. Grav.,}
{\bf 28} (1996), 581--600.

\bibitem{P2} R. Penrose. Quantum computation, entanglement and state reduction.
{\it R. Soc. Lond. Philos. Trans. Ser. A Math. Phys. Eng. Sci.,}
{\bf 356} (1998), 1927--1939.



\bibitem{SGY} Z. Shen, F. Gao, M. Yang. Ground states for nonlinear fractional Choquard equations with general nonlinearities.
    {\it Math. Methods Appl. Sci.,}
    {\bf 39} (2015),   4082--4098.



 \bibitem {RS}
     X. Ros-Oton, J. Serra.
     The Dirichlet problem for the fractional Laplacian: Regularity up to the boundary.
{\it J. Math. Pures Appl.,}
{\bf 101} (2012),  171--187.


	
	\bibitem{T}
	J. Tan.
	The Brezis-Nirenberg type problem involving the square root of the Laplacian.
	{\it Cal. Var. Partial Differential Equations,}
	{\bf 42} (2011), 21--41.
	
\bibitem {X} C. Xiang. Uniqueness and nondegeneracy of ground states for Choquard equations in three dimensions.
    {\it Calc. Var. Partial Differential Equations,}  (2016), 55.





	



	
	
	
	
\end{thebibliography}
\end{document}